%

\documentclass{birkjour}


\usepackage{enumitem}
\usepackage{tikz}
\usepackage{comment}
\usepackage{amsmath}
\usepackage{stmaryrd}
\usepackage{bbm}
\usepackage{esint}
\usepackage{marginnote}
\usepackage{amssymb}
\usepackage[hidelinks]{hyperref}
\usepackage{url}

\renewcommand{\L}[1]{\mathbf{L^{\pmb #1}}}
\newcommand{\C}[1]{\mathbf{C^{\pmb #1}}}
\newcommand{\Cc}[1]{\mathbf{C_{\rm c}^{#1}}}
\newcommand{\Lloc}[1]{\mathbf{L_{\textbf{loc}}^{\pmb #1}}}
\renewcommand{\geq}{\geqslant}
\renewcommand{\leq}{\leqslant}
\DeclareMathOperator{\TV}{TV}
\DeclareMathOperator{\sign}{sign}

\newcommand{\disint}[1]{\llbracket #1 \rrbracket}
\newcommand{\Z}{\mathbb Z}

\newtheorem{thm}{Theorem}[section]
\newtheorem{cor}[thm]{Corollary}
\newtheorem{lem}[thm]{Lemma}
\newtheorem{prop}[thm]{Proposition}
\theoremstyle{definition}
\newtheorem{defn}[thm]{Definition}
\theoremstyle{remark}

\numberwithin{equation}{section}

\begin{document}
\allowdisplaybreaks


\title[On stability of Hughes' dynamics]{On stability of one-dimensional\\Hughes' dynamics with affine costs}

\author[B.\ Andreianov]{Boris Andreianov}
\address{Institut Denis Poisson CNRS UMR7013, Universit\'e de Tours, Universit\'e d'Orl\'eans,\\ Parc Grandmont, 37200 Tours, France\\ and\\
	Peoples’ Friendship University of Russia (RUDN University)\\
	6 Miklukho-Maklaya St, Moscow, 117198, Russian Federation}
\email{boris.andreianov@univ-tours.fr}

\author[S.\ Fagioli]{Simone Fagioli}

\address{Department of Information Engineering, Computer Science and Mathematics, University of L'Aquila I-67100, Italy}
\email{simone.fagioli@univaq.it}

\author[M.D.~Rosini]{Massimiliano~D.~Rosini}

\address{Uniwersyty of Marii Curie-Sk\l odowskiej, Plac Marii Curie-Sk\l odowskiej 1 20-031 Lublin, Poland\\
Department of Management and Business Administration,
University 
of Chieti-Pescara, viale Pindaro, 42, Pescara, 65127, Italy}
\email{massimiliano.rosini@mail.umcs.pl}

\author[G.\ Stivaletta]{Graziano Stivaletta}

\address{Department of Information Engineering, Computer Science and Mathematics, University of L'Aquila I-67100, Italy}
\email{graziano.stivaletta@graduate.univaq.it}


\begin{abstract}
We investigate stability issues for the one-dimensional variant of the celebrated Hughes model for pedestrian evacuation. The cost function is assumed to be affine, which is a setting where existence of solutions with $ \mathbf{BV}_{\textbf{loc}}$ in space regularity, away from the so-called turning curve, was recently established. We provide a uniqueness result for solutions having the special property that agents never cross the turning curve (which implies that they are $ \mathbf{BV}$ globally). In the same setting, continuous dependence of solutions on the cost parameter is highlighted. {On the other hand, numerical simulations using the many-particle approximation of the model, with more general initial conditions that allow the support of the solutions to intersect the turning curve, demonstrate the strong sensitivity of the evacuation time to the same cost parameter; this instability arises from interactions between agents and the turning curve.}

	\vspace{4pt}
	
	\noindent\textsc{AMS Subject Classification: 58J45, 35L65, 35R05, 90B20 }
	
	\noindent\textsc{Keywords: Pedestrian flow; Hughes model; uniqueness of regular solutions; instability; many-particle approximation; evacuation time.}
	
	\vspace{4pt}
\end{abstract}

 \maketitle
	
\section{Introduction}

The Hughes model~\cite{Hughes02} for crowd evacuation attracted considerable attention in the mathematical and in the engineering communities in the past twenty years. Even its simplest one-dimensional version {poses significant challenges for in-depth analysis}. For the sake of more precise modeling and/or to facilitate the rigorous analysis of models' features, many variants of the original Hughes model were proposed. 
We refer to~\cite{survey2023} for a survey focused on the one-dimensional case. 
While regularized models inspired by the setting~\cite{Hughes02} (Hughes-kind models with a regularized directions field for agents' evacuation) can be throughly treated mathematically in one or more space dimensions, {well-posedness results for the original model, which includes a discontinuity in the direction field, are rare. Our study focuses on this non-regularized setting.}

The one-dimensional Hughes' model is a macroscopic pedestrian flow model that {couples two variants of the LWR model (by Lighthill-Whitham~\cite{LWR1} and Richards~\cite{LWR2}). Specifically, the model uses an LWR model with negative flux on the left of the coupling and an LWR model with positive flux on the right. As a result, it is represented by a scalar conservation law with discontinuous flux, determining the two possible directions of the agents towards the two exits.} This PDE governs the evolution of the density $\rho$ of agents. The location $x=\xi(t)$ of the jump discontinuity of the directions field is known as the ``turning curve''; the value $\xi(t)$ is obtained from the solution of {an} eikonal equation involving a density-dependent cost function and the density distribution $\rho(t,\cdot\,)$.

{The only available existence results in the $\mathbf{BV}$ setting require initial data carefully selected to exclude any interactions of the agents with the turning curve (see~\cite{AmadoriGoatinRosini} and~\cite{DiFrancescoFagioliRosiniRussoKRM}). This restriction is straightforward to interpret in microscopic Follow-the-Leader models for Hughes' dynamics, as developed in~\cite{ARS2023, DiFrancescoFagioliRosiniRussoKRM, DiFrancescoFagioliRosiniRusso}. 
One of our goals is to provide a complete and rigorous uniqueness analysis for such solutions. We establish this result under the assumption of affine cost functions, which, while restrictive, allows us to derive precise stability estimates.}

Recently, existence for general data was proved {in the case of an affine cost function~\cite{AndrGirard2024, ARS2023}}; only the $ \mathbf{BV}_{\textbf{loc}}$ regularity away from the turning curve is known for these solutions. {The affine cost assumption ensures that the turning curve $\xi$ has the $W^{1,\infty}$ regularity, which is an essential ingredient of the arguments of both works \cite{AndrGirard2024} and \cite{ARS2023}.} Moreover, a convergent approximation scheme, which can be seen as a microscopic {Hughes model,} was put forward in~\cite{ARS2023}. Although limited to the affine cost, these works address the situation where decoupling of the two equations (the one for the density $\rho$ and the one for the turning curve $\xi$) corresponds to two well-posed problems, see~\cite{AndrGirard2024} for details and a fixed-point argument based on the decoupling. This advantageous feature is due to the Lipschitz continuity of the turning curve, which is a consequence of the choice of affine costs. 
{In contrast, a recent work~\cite{Storbugt2024} established a very general existence result, for a cost given as the sum of a convex and a concave functions, in the original setting where the turning curve belongs to $\mathbf{BV} \cap \C0$. However, this setting does not resolve the question of the uniqueness of the density $\rho$, given the turning curve $\xi$.}

A first goal of the present contribution is to complement the existence result of~\cite{AndrGirard2024, ARS2023} for the one-dimensional Hughes model with affine cost {$c:=1+\alpha\,\rho$} by a discussion of the issue of stability of solutions, with respect to the {initial} data and to the cost parameter {$\alpha$}; this includes uniqueness (for the coupled problem). In a sense, we go into the direction opposite of~\cite{Storbugt2024}, where the notion of solution was relaxed enough to ensure existence, at the price of leaving open the uniqueness issue even for subproblems resulting from decoupling of the equations. 

{By strengthening the notion of a solution and restricting the set of possible initial data, we focus on a well-defined subset of Hughes dynamics, ensuring} uniqueness and several stability features.
On this track, we obtain a uniqueness and continuous dependence result in the $\L1$-norm, adapting the machinery developed for conservation laws with discontinuous flux.
Our analysis applies under the assumption of ``well-separated solutions'', that is, in absence of agents in a vicinity of the turning curve.
{It is worth noting that an affine cost function is not essential to ensure the existence of well-separated solutions. In fact, this can be established with any cost function $c$ that satisfies the following conditions: $c \in \C2$, $c' \geq 0$, $c'' > 0$, and $c(0) = 1$, as shown in \cite{AmadoriGoatinRosini, DiFrancescoFagioliRosiniRussoKRM}. However, in this work, we specifically assume that the cost function is affine to derive stability estimates.}
This study highlights the outreach and the limitations of the established {discontinuous-flux} stability techniques (cf.~e.g.~\cite{AndrLagoTakaSeguin14, SyllaNHM}) for handling the Hughes model.
 
A second objective of the present contribution is to exploit the approximation scheme of~\cite{ARS2023} (in its fully discrete version) in order to study numerically the dependence of the evacuation time on initial data and on the {cost parameter}, in solutions that are not necessarily well-separated. We point out the lack of stability {of the evacuation time}, which is attributed to interactions between agents and the turning curve.
{We stress that, from an engineering viewpoint, the evacuation time for a given datum is an essential information about the solution. Note, however, that the observed instabilities in the evacuation time do not imply instability of the solution in the $\L1$-norm, which is the most natural way to measure the proximity of two solutions of conservation laws. Further research appears necessary to investigate the $\L1$ (in)stability in solutions that are not well-separated.}

The paper is organized as follows. In Section~\ref{sec:summary} we {start by stating} the one-dimensional Hughes problem, recall the definition of solutions and the existence results available in the literature for the case of {affine} costs. We also recall the partial existence and uniqueness results known for more general costs. In Section~\ref{s:ContDep} we develop the study of uniqueness and stability of solutions, in the $\L1$-norm for the pedestrian density, under a strong structural assumption on the solutions that we call ``well-separation''; the statements are provided within the section. In Section~\ref{sec:scheme-and-numerics} we recall the semi-discrete deterministic microscopic approximation of the problem and use the scheme to simulate a variety of scenarios, focusing on the dependence of the {evacuation} time upon the cost parameter and perturbation on the initial datum.

\section{The model, the definitions and some results recalled}
\label{sec:summary}

Consider a bounded corridor, parametrized by $x \in \mathfrak{C} := (-1,1)$, where the endpoints $x = \pm1$ model the two exits.
The agents' density $\rho=\rho(t,x)\geq0$ in the corridor is bounded from above by the maximal density $\rho_{\max}>0$. 
The Hughes model~\cite{Hughes02} in this one-dimensional situation consists of the PDE
\begin{equation}\label{eq:reformul}
	\rho_{t}+\bigl(V(t,x) \, {f( \rho )}\bigr)_{x} = 0,
\end{equation}
(with appropriate boundary conditions, discussed later), where for every fixed $t\geq 0$, the velocity field $V(t,x)=\frac{\partial_x \phi(t,x)}{|\partial_x \phi(t,x)|}$ is determined from the solution $\phi(t,\cdot\,)$ of the eikonal equation 
\begin{align*}
&|\partial_x \phi(t,x)|=c(\rho(t,x)) \text{ in $\mathfrak{C}$,}&
&\phi(t,\cdot\,)|_{\partial \mathfrak{C}}=0.
\end{align*}
Here, $c\in \C0([0,1]; [1,\infty))$ is called ``the cost function'' and is given as a part of the model.

Following~\cite{AmadoriDiFrancesco, AmadoriGoatinRosini, DiFrancescoFagioliRosiniRussoKRM, ElKhatibGoatinRosini}, due to the explicit resolution of the eikonal equation, we can adapt the ``turning-curve approach'', expressing $V(t,x)$ as $\sign\left(x-\xi(t)\right)$ where, given $t\geq 0$, the value $\xi(t)\in \mathfrak{C}$ is computed by solving the scalar equation
\begin{equation}
\label{e:cost0}
\int_{-1}^{\xi(t)}c\bigl( \rho(t,y)\bigr) \, {\rm{d}} y = \int_{\xi(t)}^{1}c\bigl( \rho(t,y)\bigr) \, {\rm{d}} y.
\end{equation}
The map $t\mapsto \xi(t)$ is called the turning curve throughout the paper; indeed, it can be shown (cf.~\cite{Storbugt2024}) that even in a very general setting, $\xi$ is a $\C0$ map.

Further, as detailed in~\cite{ARS2023}, one can replace the standard homogeneous Dirichlet boundary condition for the Hughes model by an ``open-end formulation'' in which the conservation law resolution is extended from $x\in \mathfrak{C}$ to $x\in \mathbb{R}$. {This modification merely requires extending the initial density datum $\overline{\rho}$, originally defined on $\mathfrak{C}$, to a function defined on $\mathbb{R}$ and supported in $\mathfrak{C}$.} Somewhat abusively, we will keep the notation $\overline{\rho}$ for the initial density datum extended by zero to $\mathbb{R}\setminus\mathfrak{C}$.
{We recall this result in Proposition~\ref{prop:open-end}, as it will be instrumental in Section~\ref{s:ContDep}, where we utilize the Dirichlet formulation.}

To sum up, we adopt a formulation from previous works on the subject. This formulation, which is equivalent to the original one for an arbitrary cost function, involves solving in the appropriate entropy sense the Cauchy problem for a discontinuous-flux scalar conservation law
\begin{equation}\label{eq:reformul}
\left\{
\begin{array}{@{}l@{\quad}l@{\ }l@{}}
\rho_{t}+\bigl(\sign\left(x-\xi(t)\right) {f( \rho )}\bigr)_{x} = 0,& t > 0, &x \in \mathbb{R},
\\[3pt]
\rho(0,x)=\overline{\rho}(x),&& x\in \mathbb{R}.
\end{array}	
\right.
\end{equation}
Additionally, for $t\geq 0$, $\xi(t)$ is determined by equation \eqref{e:cost0}.

For the sake of notational convenience, {we introduce the Kruzhkov entropy fluxes}
\begin{align}
\label{e:Phi}
\Phi(t,x, \rho ,\kappa,\xi) &:= \sign\bigl(x-\xi(t)\bigr) \, \sign( \rho -\kappa) \, \bigl( f( \rho ) - f(\kappa) \bigr).
\end{align}
All the above discussion is then summarized in the following solution notion.
\begin{defn}[Open-end formulation]
	\label{d:entro-bis}
	Consider a measurable initial datum $\overline{\rho} \colon \mathbb{R} \to [0, \rho_{\max}]$ supported in $\mathfrak{C}$.
{A couple
\[(\rho,\xi) \in \left( \Lloc1\left([0, \infty) \times \mathbb{R};[0, \rho_{\max}]\right) \cap \C0([0,\infty);\L1(\mathbb{R})) \right) \times \mathbf{Lip}([0, \infty);\mathfrak{C})\]
is an entropy solution of the one-dimensional Hughes model if it satisfies \eqref{e:cost0} for a.e.\ $t>0$, the initial condition in \eqref{eq:reformul} for a.e.\ $x\in\mathbb{R}$, and the PDE in \eqref{eq:reformul} in the sense that it satisfies the entropy inequalities
	\begin{align}
		\label{e:entro-bis}
		0 \leq{}& \int_0^{ \infty} \int_\mathbb{R} \bigl( | \rho -\kappa| \, \varphi_t + \Phi(t,x, \rho ,\kappa,\xi) \, \varphi_x \bigr) \, {\rm{d}} x \, {\rm{d}} t
		+2 \int_0^{ \infty} f(\kappa) \, \varphi\bigl(t,\xi(t)\bigr) \, {\rm{d}} t
	\end{align}
	for all $\kappa \in [0, \rho_{\max}]$ and all test functions $\varphi \in \Cc \infty\left((0, \infty) \times \mathbb{R};[0, \infty)\right)$.}
	
	Additionally, we say that the solution is $ \mathbf{BV}$-regular if for all $T>0$, {$\sup \{\TV(\rho(t,\cdot\,)) : t\in[0,T]\}$} is finite.
\end{defn}
{The former integral in \eqref{e:entro-bis} originates from the classical Kruzhkov \cite[Definition~1]{Kruzhkov}. The latter integral in \eqref{e:entro-bis} accounts for the possible arise of non-classical shocks at the turning point.}
The above definition can be slightly simplified, dropping the entropy admissibility conditions on $x=\xi(t)$.
Indeed, in the Hughes context the mere conservation, expressed through the classical Rankine-Hugoniot condition, is enough to encode the solution admissibility at the turning curve; we refer to \cite[Definition~1.6]{AndrGirard2024} for details.

The following existence result is proven in~\cite{ARS2023} using a constructive approach, which is meaningful for applications, and {employing an abstract fixed-point argument under weaker assumptions} in~\cite{AndrGirard2024}, see \cite[Theorem~5]{ARS2023} and \cite[Proposition~1.9]{AndrGirard2024}{, respectively.}

\begin{thm}[Existence for {affine} costs]
	\label{t:existence} 
	Consider an affine cost $c(\rho):=1+\alpha\,\rho$ with $\alpha\geq 0$.
	{Assume that $f$ satisfies the following conditions:}
	\begin{itemize}
	\item
	$f\in \mathbf{W^{\pmb{1,\infty}}}((0,\rho_{\max});[0,\infty))$ is concave and such that $f(0) = 0 = f(\rho_{\max})$.
	\item
	The set $\{\rho \in [0,\rho_{\max}] : f'(\rho)=0\}$ has zero measure.	
	\end{itemize}
	Then for any initial datum $\overline{\rho} \in \L\infty(\mathbb{R};[0,\rho_{\max}])$ such that $\overline{\rho}(x)=0$ for $x \in \mathbb{R} \setminus \mathfrak{C}$, there exists an entropy solution $(\rho,\xi)$ of the one-dimensional Hughes model.
	
	{If furthermore $f\in\C2([0,\rho_{\max}];[0,\infty))$ is such that $f(\rho) > \rho \, f'(\rho)$ and $f(\rho) - \rho \, f'(\rho) + \rho^{2} \, f''(\rho) \leq0$ for any $\rho\in(0,\rho_{\max}]$,} then an entropy solution can be constructed as the limit of the many-particle Hughes model detailed in \cite[Section~3]{ARS2023}.
\end{thm}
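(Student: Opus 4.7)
The plan is to exploit the decoupling of system \eqref{eq:reformul}--\eqref{e:cost0}. If $\xi$ is treated as a prescribed Lipschitz curve, then \eqref{eq:reformul} is a Cauchy problem for a scalar conservation law with flux discontinuous across $x=\xi(t)$, well-posed in the entropy class of Definition~\ref{d:entro-bis} by standard discontinuous-flux theory. Conversely, given $\rho(t,\cdot\,)$, equation \eqref{e:cost0} uniquely determines $\xi(t)\in\mathfrak{C}$ because $c\geq 1$ makes the defining balance strictly monotone in $y$. Existence thus reduces to finding a fixed point of the composite operator $\mathcal T\colon\xi\mapsto \Xi[\rho_\xi]$.

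The first key step is to show that $\mathcal T$ maps a closed convex ball of $L$-Lipschitz curves into itself. Formally differentiating \eqref{e:cost0} in $t$, substituting $\rho_t$ from \eqref{eq:reformul}, and using the affine structure $c(\rho)=1+\alpha\,\rho$ yields an identity of the form $\bigl(2+\alpha\,(\rho(t,\xi^+)+\rho(t,\xi^-))\bigr)\,\dot\xi(t)=G(\rho,\xi)$, with $G$ uniformly bounded in terms of $\alpha$ and $\|f\|_\infty$. Since the coefficient in front of $\dot\xi$ is bounded below by $2$, one obtains $|\dot\xi|\leq L$ for some $L=L(\alpha,\|f\|_\infty)$, independently of the input $\xi$. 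Compactness of $\mathcal T$'s image is granted by Arzelà--Ascoli in the uniform topology on $[0,T]$; continuity of $\mathcal T$ reduces to $\L1$-stability of entropy solutions of the $\xi$-prescribed conservation law under $\C0$-perturbations of $\xi$, which follows from the Kruzhkov doubling-of-variables technique adapted to the moving interface. Schauder's fixed-point theorem on $[0,T]$ then yields existence, and arbitrary $T$ produces a global solution.

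For the constructive part, under the additional $\C2$ and structural conditions on $f$, I would run the microscopic Follow-the-Leader scheme with $N$ agents whose velocities are $\sign(x_i^N-\xi^N)\,f(\rho_i^N)/\rho_i^N$, the discrete turning index being recomputed from the instantaneous configuration via a discrete version of \eqref{e:cost0}. The assumptions $f(\rho)>\rho\,f'(\rho)$ and $f(\rho)-\rho\,f'(\rho)+\rho^2\,f''(\rho)\leq 0$ are precisely those that yield the needed monotonicity of the discrete density and uniform $\mathbf{BV}$ bounds, hence $\Lloc1$-compactness of the empirical densities. Passing to the limit in the discrete entropy-dissipation inequality then delivers an entropy solution in the sense of Definition~\ref{d:entro-bis}.

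The main obstacle is to pass to the limit simultaneously in \eqref{eq:reformul} and \eqref{e:cost0}, particularly when mass concentrates near $\xi(t)$. The affine cost hypothesis is structurally essential: it ensures uniform Lipschitz regularity of $\xi$ (respectively $\xi^N$), which rules out fast oscillations of the turning curve and guarantees that the limit is a well-defined Lipschitz map into $\mathfrak{C}$. One must then verify the interface term $2\,f(\kappa)\,\varphi(t,\xi(t))$ in \eqref{e:entro-bis}, which encodes the non-classical shocks admissible at the turning curve; in the constructive approach, this amounts to carefully identifying the limit of the particle--interface interactions.
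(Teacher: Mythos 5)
The paper does not actually prove Theorem~\ref{t:existence}: it is recalled from the literature, with the constructive proof attributed to \cite[Theorem~5]{ARS2023} and the abstract fixed-point proof to \cite[Proposition~1.9]{AndrGirard2024}. Your two-pronged sketch correctly identifies both of these strategies, and your central structural observation --- that the affine cost forces a uniform Lipschitz bound on $\xi$ through the differentiated identity $\bigl(2+\alpha(\rho(t,\xi^+)+\rho(t,\xi^-))\bigr)\,\dot\xi=G$ with the coefficient bounded below by $2$ --- is exactly the feature the paper emphasizes as the reason the affine case is tractable.

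Judged as a proof, however, the sketch has genuine gaps at precisely the steps you dispatch in one line. First, the continuity of the fixed-point map $\xi\mapsto\Xi[\rho_\xi]$: you invoke ``Kruzhkov doubling-of-variables adapted to the moving interface'', but the paper's own Section~3 shows that this machinery requires a $\mathbf{BV}$ bound on one of the two solutions \emph{and} vanishing traces on the turning curve (well-separation) in order to kill the interface terms in \eqref{eq:ContDepLocal}; neither is available for general $\L\infty$ data, and the authors explicitly state that dropping well-separation ``would require new ideas''. So continuity of $\mathcal T$ is not a routine flux-perturbation estimate; it is the actual content of \cite{AndrGirard2024}. Second, for the constructive part you claim the structural hypotheses on $f$ yield ``uniform $\mathbf{BV}$ bounds, hence $\Lloc1$-compactness of the empirical densities''. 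This contradicts the paper, which states that $\mathbf{BV}$-regularity of solutions for $\mathbf{BV}$ data is an \emph{open question} even for affine costs, and that only $\mathbf{BV}_{\textbf{loc}}$ regularity \emph{away from the turning curve} is known: the discrete density is not controlled in $\mathbf{BV}$ near $x=\xi^n(t)$ when particles interact with the turning curve, so compactness and the identification of the interface term $2f(\kappa)\,\varphi(t,\xi(t))$ in \eqref{e:entro-bis} require a localized argument rather than a global $\mathbf{BV}$ estimate. Finally, the Lipschitz bound on $\dot\xi$ cannot be obtained by ``formal differentiation'' of \eqref{e:cost0} for a merely $\L\infty$ entropy solution: one needs the existence of strong one-sided traces of $\rho$ (as exploited in the proof of Proposition~\ref{prop:xidot}) to give meaning to $\rho(t,\xi(t)^\pm)$ and to the boundary values, or one must establish the bound at the level of the approximations and pass to the limit.
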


{Let us recall the physical interpretation of the parameter $\alpha \geq 0$. When $\alpha = 0$, pedestrians choose the nearest exit, ignoring the overall density of the crowd. This behaviour is characteristic of panic scenarios. Conversely, as $\alpha$ increases, the model places greater importance on distributing pedestrians more evenly between the two exits. As a result, higher values of $\alpha$ discourage movement toward overcrowded regions.}
Note that it is an open question whether the solutions of the one-dimensional Hughes model, even for the case of {affine} costs, are $ \mathbf{BV}$-regular for $ \mathbf{BV}$ initial data.

\section{Stability for well-separated solutions for {affine} costs}\label{s:ContDep}
Now we define the specific class of solutions for which we analyze in detail the uniqueness and stability properties.
\begin{defn}[Well-separated solution]
	\label{d:well-separated}
	An entropy solution $(\rho,\xi)$ of the one-dimensional Hughes model is called well-separated if the limits $\rho(t,\xi(t)^\pm)$ are equal to zero {for a.e. $t>0$.}
\end{defn}
We recall that the existence of strong one-sided traces of $\rho$ on the Lipschitz curve $x=\xi(t)$, exploited in the above definition, is automatic: it follows from the local entropy inequalities and the strict non-linearity of the flux, see~\cite{MR1869441}.

Notice that well-separated solutions were investigated in~\cite{AmadoriGoatinRosini, DiFrancescoFagioliRosiniRussoKRM} (see the latter work for an interpretation of the results in terms of absence of crossing {between agents' paths} and the trajectory of the turning curve).
We then assert the following stability claim.
\begin{thm}[Stability of well-separated solutions]
	\label{t:WS-stability} 
	Consider an affine cost $c(\rho):=1+\alpha \, \rho$ with $\alpha\geq 0$.
	For $i\in\{1,2\}$, let $( \rho_{i},\xi_{i})$ be a well-separated entropy solution of the one-dimensional Hughes model{, in the sense of Definitions~\ref{d:entro-bis},~\ref{d:well-separated},} corresponding to the initial datum $\overline{\rho}_{i}$ and the cost parameter $\alpha_{i}$. 
	If $\overline{\rho}_1$ is a $\mathbf{BV}$ function, then for a.e.\ $t>0$ there exists a constant $C$ that depends, in a non-decreasing way, only on $t$ and $\max\{\alpha_{1},\alpha_{2}\}$, such that
	\begin{equation}\label{eq:stability}
		\left\| \rho_{1}(t,\cdot\,)- \rho_{2}(t,\cdot\,)\right\|_{\L1(\mathfrak{C})} \leq C\bigl(|\alpha_{1}-\alpha_{2}|+\left\|\overline{\rho}_{1}-\overline{\rho}_{2}\right\|_{\L1(\mathfrak{C})}\bigr).
	\end{equation}
	In particular, for any fixed cost parameter $\alpha$ and initial datum $\overline{\rho}\in  \mathbf{BV}(\mathbb{R})$ there exists at most one well-separated solution.
\end{thm}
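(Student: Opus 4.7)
The plan is to combine a quantitative control of the turning curve with a Kruzhkov doubling-of-variables argument adapted to a flux discontinuity that moves with the solution, and then close the resulting coupling by Gronwall's lemma. The first step is to estimate $|\xi_{1}(t)-\xi_{2}(t)|$ in terms of $|\alpha_{1}-\alpha_{2}|$ and $\|\rho_{1}(t,\cdot\,)-\rho_{2}(t,\cdot\,)\|_{\L1(\mathfrak{C})}$. With $c(\rho)=1+\alpha_{i}\rho$, equation~\eqref{e:cost0} reads $G_{i}(\xi_{i}(t))=0$ for
\[G_{i}(\xi):=2\xi-\alpha_{i}\int_{\xi}^{1}\rho_{i}(t,y) \, {\rm d} y+\alpha_{i}\int_{-1}^{\xi}\rho_{i}(t,y) \, {\rm d} y,\]
and since $G_{i}'(\xi)=2+2\alpha_{i}\rho_{i}(t,\xi)\geq 2$, the tautology $G_{1}(\xi_{1})-G_{1}(\xi_{2})=G_{2}(\xi_{2})-G_{1}(\xi_{2})$ together with the bound $\rho_{i}\leq\rho_{\max}$ yields
\[|\xi_{1}(t)-\xi_{2}(t)|\leq \rho_{\max}\,|\alpha_{1}-\alpha_{2}|+\tfrac{1}{2}\max\{\alpha_{1},\alpha_{2}\}\,\|\rho_{1}(t,\cdot\,)-\rho_{2}(t,\cdot\,)\|_{\L1(\mathfrak{C})}.\]

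The second step is to write~\eqref{e:entro-bis} for $\rho_{1}$ with $\kappa=\rho_{2}(s,y)$ and for $\rho_{2}$ with $\kappa=\rho_{1}(t,x)$, apply Kruzhkov's doubling of variables with a test function concentrated on the diagonal $\{t=s,\,x=y\}$, and pass to the limit in the mollifier. After this limit the effective entropy flux acquires an extra ``mismatch'' contribution
\[\bigl[\sign(x-\xi_{1}(s))-\sign(x-\xi_{2}(s))\bigr]\,\sign(\rho_{1}-\rho_{2})\,f(\rho_{2})\]
(plus its $1\leftrightarrow 2$ symmetric counterpart), supported in the strip between $\xi_{1}(s)$ and $\xi_{2}(s)$ and therefore bounded by $C\,|\xi_{1}(s)-\xi_{2}(s)|$ once one uses that the $\mathbf{BV}$ hypothesis on $\overline{\rho}_{1}$ propagates, away from the turning curve, to a uniform local $\mathbf{BV}$ bound on $\rho_{1}(s,\cdot\,)$. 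The two residual interface integrals $2\int f(\rho_{3-i}(s,\xi_{i}(s)^{\pm}))\,\phi(s,\xi_{i}(s)) \, {\rm d} s$ coming from the last term in~\eqref{e:entro-bis} are killed by well-separation: each $\rho_{3-i}$ vanishes in a one-sided neighborhood of its own turning curve, so for $|\xi_{1}(s)-\xi_{2}(s)|$ small the cross-traces $\rho_{3-i}(s,\xi_{i}(s)^{\pm})$ also vanish. Choosing a test function that approximates the indicator of $[0,t]\times\mathfrak{C}$ then produces
\[\|\rho_{1}(t,\cdot\,)-\rho_{2}(t,\cdot\,)\|_{\L1(\mathfrak{C})}\leq \|\overline{\rho}_{1}-\overline{\rho}_{2}\|_{\L1(\mathfrak{C})}+C\int_{0}^{t}|\xi_{1}(s)-\xi_{2}(s)| \, {\rm d} s,\]
and substituting the Step~1 bound and invoking Gronwall's lemma delivers~\eqref{eq:stability}; uniqueness follows as the special case $\alpha_{1}=\alpha_{2}$, $\overline{\rho}_{1}=\overline{\rho}_{2}$.

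The main obstacle I anticipate is the rigorous handling of the mismatch term in Step~2. Unlike in the classical discontinuous-flux stability machinery of~\cite{AndrLagoTakaSeguin14, SyllaNHM}, where the flux discontinuity is common to both solutions being compared, here each solution carries its own turning curve and the Kruzhkov interface contribution attached to one solution must be evaluated at the \emph{other} solution's interface. It is precisely the well-separation hypothesis, combined with the $\mathbf{BV}$ regularity needed to control local oscillations of $\rho_{1}$ in the strip between the two interfaces, that makes these cross-traces vanish and allows the doubling computation to close; dropping well-separation appears to require a genuinely different idea.
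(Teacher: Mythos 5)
Your overall architecture (bound $|\xi_1-\xi_2|$ by $\|\rho_1-\rho_2\|_{\L1}$, bound $\|\rho_1-\rho_2\|_{\L1}$ by the turning curves, close with Gronwall) is reasonable, and your Step~1 is correct — it is essentially the computation the paper uses for $|\xi_1(0)-\xi_2(0)|$, applied at every time. The genuine gap is in Step~2. First, you misread well-separation: Definition~\ref{d:well-separated} only asserts that the one-sided \emph{traces} $\rho_i(t,\xi_i(t)^\pm)$ vanish, not that $\rho_i$ vanishes on a one-sided \emph{neighbourhood} of its own turning curve. Consequently the cross-traces $\rho_2(s,\xi_1(s)^\pm)$ are not controlled by the hypothesis, no matter how close the two curves are; your argument for killing the residual interface terms does not go through (and is in any case circular, since the smallness of $|\xi_1(s)-\xi_2(s)|$ is only available a posteriori). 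Second, the mismatch term cannot be bounded by $C\,|\xi_1(s)-\xi_2(s)|$ in the way you claim: the flux difference $\bigl[\sign(x-\xi_1)-\sign(x-\xi_2)\bigr]f(\rho)$ is small only in $\L1_x$, while its $\rho$-derivative is of size $2\|f'\|_{\L\infty}$ on the whole strip and its $x$-derivative carries Dirac masses of fixed amplitude at $\xi_1$ and $\xi_2$. The continuous-dependence estimates of \cite{BouchutPerthame, KarlsenRisebro, Mercier} then produce error terms of the form $\int_0^t \TV\bigl(\rho_i(s,\cdot\,);\text{strip}(s)\bigr)\,{\rm d}s$ plus trace contributions at $\xi_1$ and $\xi_2$; a global $\mathbf{BV}$ bound does not force the variation concentrated in the shrinking strip to be $O(|\xi_1-\xi_2|)$, so the doubling computation does not close.

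The paper avoids both obstructions by \emph{aligning} the interfaces before comparing: it sets $\rho_3(t,x):=\rho_1(t,x-\xi_2(t)+\xi_1(t))$, so that $\rho_3$ and $\rho_2$ have their flux discontinuity on the \emph{same} curve $x=\xi_2(t)$, where both have zero traces by well-separation; the price is the drift $(\dot\xi_2-\dot\xi_1)\rho_3$ in the flux, an $x$-independent perturbation handled by the standard $\TV$-weighted continuous-dependence estimate. This is why the paper needs to control $\int_0^t|\dot\xi_1-\dot\xi_2|$ rather than $\int_0^t|\xi_1-\xi_2|$, which in turn requires the explicit formula $\dot\xi_i=\tfrac{\alpha_i}{2}\bigl(f(\rho_i(t,-1^+))-f(\rho_i(t,1^-))\bigr)$ and a Kato comparison of boundary traces valid only up to a time $\tau$ determined by finite speed of propagation — hence the stop-and-restart bootstrap replacing your Gronwall step. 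If you want to salvage your direct comparison, you would need to strengthen the well-separation hypothesis to vanishing of the density on a strip of uniform width around the turning curve, which is not what the theorem assumes.
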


We emphasize that the $\mathbf{BV}$-regularity of {$\overline{\rho}_{1}$} is a crucial ingredient of the stability argument; indeed, being understood that $\xi_{1}\not\equiv \xi_{2}$, we have to call upon quantitative stability of solutions of conservation laws $\rho_t + f(t,x, \rho )_x=0$ with respect to perturbations of the flux $f$. 
All such results (see~\cite{BouchutPerthame, KarlsenRisebro, Mercier} for some well known ones) rely upon the $\mathbf{BV}$ in space estimate on at least one of the two solutions.
Moreover, the open-end reformulation of Definition~\ref{d:entro-bis} is instrumental in our stability analysis. 
Finally, we stress that the well-separation assumption is not merely {technical: dropping} it would require new ideas for the sake of uniqueness analysis.

We turn to the proof of Theorem~\ref{t:WS-stability}.
The claim of the theorem follows by Lemma~\ref{lem:BV-reg} highlighting the fact that the well-separated solutions corresponding to $\mathbf{BV}$ data are automatically $ \mathbf{BV}$-regular, and by a straightforward combination of two distinct ingredients provided in Propositions~\ref{prop:xidot} and~\ref{prop:rho-of-xi}. We state the Lemma, and both Propositions before turning to {their proofs.}
\begin{lem}\label{lem:BV-reg}
	Let $(\rho,\xi)$ be a well-separated solution of the one-dimensional Hughes model, in the sense of Definitions~\ref{d:entro-bis},~\ref{d:well-separated}.
	If the initial datum $\overline{\rho}$ is in $\mathbf{BV}(\mathbb{R})$, then the solution is $\mathbf{BV}$-regular, i.e., $\rho\in \Lloc\infty([0,\infty);\mathbf{BV}(\mathbb{R}))$.
\end{lem}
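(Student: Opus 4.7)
The plan is to decouple the problem at the turning curve, exploiting the well-separation property. Since $\rho(t,\xi(t)^\pm)=0$ for a.e.\ $t>0$, no mass is exchanged between the two sides of the turning curve, and on each side the equation reduces to a standard scalar conservation law with constant flux sign, to which classical Kruzhkov theory applies to deliver the required $\mathbf{BV}$ estimate.

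Concretely, I would split $\rho(t,\cdot\,)=\rho_L(t,\cdot\,)+\rho_R(t,\cdot\,)$ by setting $\rho_L:=\rho\,\mathbf{1}_{\{x<\xi(t)\}}$ and $\rho_R:=\rho\,\mathbf{1}_{\{x>\xi(t)\}}$, and then verify that $\rho_L$, extended by zero to the right, is a Kruzhkov entropy solution of $\partial_t u-\partial_x f(u)=0$ on $\mathbb{R}\times(0,\infty)$ with initial datum $\overline{\rho}_L:=\overline{\rho}\,\mathbf{1}_{\{x<\xi(0)\}}$, and symmetrically that $\rho_R$ solves $\partial_t u+\partial_x f(u)=0$ with datum $\overline{\rho}_R:=\overline{\rho}\,\mathbf{1}_{\{x>\xi(0)\}}$. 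The local entropy inequalities in the open region $\{x<\xi(t)\}$ (where the flux in \eqref{eq:reformul} coincides with $-f(\rho_L)$) follow from Definition~\ref{d:entro-bis} by restricting to test functions that vanish near the turning curve, while the extension across $x=\xi(t)$ is admissible because both one-sided traces of $\rho_L$ on the Lipschitz curve vanish: the right trace by construction, the left trace by well-separation combined with the strong trace existence result of \cite{MR1869441}. Hence the Rankine-Hugoniot condition is trivially satisfied at $x=\xi(t)$ and there is no shock to dissipate.

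Granting this decoupling, the classical TV-diminishing property of Kruzhkov solutions to scalar conservation laws with concave flux yields $\TV(\rho_L(t,\cdot\,))\leq\TV(\overline{\rho}_L)$ and $\TV(\rho_R(t,\cdot\,))\leq\TV(\overline{\rho}_R)$ for all $t\geq 0$. Since $\overline{\rho}\in\mathbf{BV}(\mathbb{R})\cap\L\infty(\mathbb{R})$, truncating at $\xi(0)$ adds at most a jump of size $\overline{\rho}(\xi(0)^\pm)\leq\|\overline{\rho}\|_{\L\infty(\mathbb{R})}$ to the variation on each side, so
\[
\TV(\overline{\rho}_L)+\TV(\overline{\rho}_R)\leq\TV(\overline{\rho})+2\,\|\overline{\rho}\|_{\L\infty(\mathbb{R})}.
\]
Finally, $\rho_L(t,\cdot\,)$ and $\rho_R(t,\cdot\,)$ have disjoint supports separated by $x=\xi(t)$, where both vanish, so $\TV(\rho(t,\cdot\,))=\TV(\rho_L(t,\cdot\,))+\TV(\rho_R(t,\cdot\,))$ is bounded uniformly in $t$, which is precisely $\rho\in\Lloc\infty([0,\infty);\mathbf{BV}(\mathbb{R}))$.

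The delicate step is the decoupling. The global entropy inequality \eqref{e:entro-bis} for $\rho$ carries the extra penalty $2\int_0^\infty f(\kappa)\varphi(t,\xi(t))\,{\rm{d}}t$ at the turning curve and must be reorganised into two independent Kruzhkov inequalities, one for $\rho_L$ and one for $\rho_R$. The bookkeeping requires the strong trace information together with an application of the divergence theorem on the time-dependent domains $\{x\lessgtr\xi(t)\}$: constant-state contributions of the form $\int_0^\infty\bigl(\kappa\,\xi'(t)\pm f(\kappa)\bigr)\varphi(t,\xi(t))\,{\rm{d}}t$ arise on the zero side and must be absorbed consistently against the turning-curve penalty. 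Once this is in place, the remainder of the argument is a routine application of classical scalar conservation law theory.
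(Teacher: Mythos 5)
Your proof is correct, but it takes a genuinely different route from the paper's. The paper rectifies the turning curve via the Lipschitz change of variables $y=x-\xi(t)$, which turns each half into an initial--boundary value problem on a half-line for the \emph{time-dependent} flux $u\mapsto f(u)-\dot\xi(t)\,u$ with a zero Dirichlet condition at $y=0$ in the BLN sense (the boundary datum being attained thanks to well-separation); the $\mathbf{BV}$ bound is then inherited from approximate solutions via the uniqueness theory for such IBVPs with merely $\L\infty$ time dependence of the flux. You instead stay in the original variables, extend each one-sided restriction by zero \emph{across} the moving curve, and observe that the glued function is a global Kruzhkov entropy solution of the autonomous equation $u_t\mp f(u)_x=0$ on all of $\mathbb{R}$ --- legitimate precisely because both one-sided strong traces on the Lipschitz curve vanish, so the Rankine--Hugoniot and entropy conditions across $x=\xi(t)$ hold trivially --- and then you invoke classical uniqueness and the TV-diminishing property. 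Your route is more elementary: it avoids boundary-value theory, time-dependent fluxes and approximation arguments altogether, at the price of the (standard, trace-based) gluing lemma across a Lipschitz curve, and it yields a slightly worse constant ($\TV(\overline{\rho})+2\|\overline{\rho}\|_{\L\infty}$ instead of $\TV(\overline{\rho})$) because of the artificial truncation at $\xi(0)$. Two minor remarks: concavity of $f$ is irrelevant for the TVD property (any Lipschitz flux will do); and your worry about reorganising the penalty term $2\int_0^\infty f(\kappa)\,\varphi(t,\xi(t))\,{\rm d}t$ is unnecessary --- the local entropy inequalities on either side are obtained from \eqref{e:entro-bis} with test functions vanishing near the curve, which kills that term, and the gluing is then performed for the single-flux equation, where no such term ever appears.
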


\begin{prop}\label{prop:xidot}
{Using the same notation as in Theorem~\ref{t:WS-stability}, there exist $\tau>0$ and $K>0$, which} depend only on $\max\{\alpha_{1},\alpha_{2}\}$, such that
\begin{align}\label{eq:integral-dotxi-estim}
\int_0^\tau|\dot{\xi}_{1}(s)-\dot{\xi}_{2}(s)|\, {\rm{d}} s &\leq K\Bigl(|\alpha_{1}-\alpha_{2}|+\left\|\overline{\rho}_{1}-\overline{\rho}_{2}\right\|_{\L1(\mathfrak{C})}\Bigr),
\\\label{eq:xi(0)-estim}
|\xi_{1}(0)-\xi_{2}(0)| &\leq K\Bigl(|\alpha_{1}-\alpha_{2}|
+\left\|\overline{\rho}_{1}-\overline{\rho}_{2}\right\|_{\L1(\mathfrak{C})}\Bigr).
\end{align}
\end{prop}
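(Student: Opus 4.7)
The plan is to exploit the implicit algebraic relation \eqref{e:cost0}, which, thanks to the affine cost $c_i(\rho)=1+\alpha_i\rho$, reads $\mathcal{F}_i(t,\xi_i(t))=0$ with
\[
\mathcal{F}_i(t,s):=2s+\alpha_i\int_{-1}^{s}\rho_i(t,y)\,{\rm{d}} y-\alpha_i\int_{s}^{1}\rho_i(t,y)\,{\rm{d}} y.
\]
Two features of $\mathcal{F}_i$ are central. First, $\partial_s\mathcal{F}_i(t,s)\geq 2$ uniformly, so $s\mapsto\mathcal{F}_i(t,s)$ is strictly monotone with a uniform lower bound on the slope. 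Second, at $s=\xi_i(t)$ the well-separation assumption $\rho_i(t,\xi_i(t)^\pm)=0$ sharpens the slope to exactly $2$, which together with the conservation law in \eqref{eq:reformul} will produce a clean expression for $\dot\xi_i$ via implicit differentiation.

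For \eqref{eq:xi(0)-estim}, evaluate at $t=0$ and write
\[
0=\mathcal{F}_1(0,\xi_1(0))-\mathcal{F}_2(0,\xi_2(0))=\bigl[\mathcal{F}_1(0,\xi_1(0))-\mathcal{F}_1(0,\xi_2(0))\bigr]+\bigl[\mathcal{F}_1(0,\xi_2(0))-\mathcal{F}_2(0,\xi_2(0))\bigr].
\]
The first bracket has absolute value at least $2|\xi_1(0)-\xi_2(0)|$ by the slope bound, while the second splits into an $(\alpha_1-\alpha_2)$-contribution of size at most $2\rho_{\max}|\alpha_1-\alpha_2|$ (using $\|\overline\rho_1\|_{\L1(\mathfrak{C})}\leq 2\rho_{\max}$) plus an $\alpha_2$-weighted $\L1$-difference of $\overline\rho_1-\overline\rho_2$. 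Solving for $|\xi_1(0)-\xi_2(0)|$ yields \eqref{eq:xi(0)-estim} with $K$ depending only on $\alpha_{\max}:=\max\{\alpha_1,\alpha_2\}$ and the fixed geometric constant $\rho_{\max}$.

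For \eqref{eq:integral-dotxi-estim}, differentiating $\mathcal{F}_i(t,\xi_i(t))=0$ in $t$ and using the conservation law in \eqref{eq:reformul} to convert $\partial_t\rho_i$ into spatial divergences of $\sign(x-\xi_i)f(\rho_i)$ yields, after exploiting well-separation to kill both the boundary traces at $\xi_i$ and the density correction in $\partial_s\mathcal{F}_i$,
\[
\dot\xi_i(t)=\frac{\alpha_i}{2}\bigl(f(\rho_i(t,-1^+))-f(\rho_i(t,1^-))\bigr).
\]
Mass conservation in the open-end formulation identifies $\int_0^\tau f(\rho_i(s,-1^+))\,{\rm{d}} s$ with the total mass that has exited $\mathfrak{C}$ through $x=-1$ by time $\tau$, hence bounded by $\|\overline\rho_i\|_{\L1(\mathfrak{C})}\leq 2\rho_{\max}$, and similarly at $x=1$. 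Splitting $\dot\xi_1-\dot\xi_2$ into an $(\alpha_1-\alpha_2)$-piece and an $\alpha_2$-weighted trace-difference piece, the first integrates to a term of size at most a constant depending only on $\rho_{\max}$ times $|\alpha_1-\alpha_2|$.

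The main obstacle is to bound the remaining trace-difference integrals $\int_0^\tau|f(\rho_1(s,\pm 1^\mp))-f(\rho_2(s,\pm 1^\mp))|\,{\rm{d}} s$ by $\|\overline\rho_1-\overline\rho_2\|_{\L1(\mathfrak{C})}$. Since $\overline\rho_1\in\mathbf{BV}(\mathbb{R})$, Lemma~\ref{lem:BV-reg} ensures that $\rho_1$ is $\mathbf{BV}$-regular, which provides the boundary trace $s\mapsto\rho_1(s,-1^+)$ as a $\mathbf{BV}$ function on $[0,\tau]$. On each half-domain $\{x<\xi_i(t)\}$ and $\{x>\xi_i(t)\}$, $\rho_i$ solves a (one-sided) LWR initial-boundary-value problem whose moving-boundary Dirichlet condition $\rho_i=0$ is precisely the well-separation assumption, and the standard $\L1$-contraction/stability for such problems, combined with a short-time comparison argument—the smallness of $\tau$ coming through the Lipschitz constant $\alpha_{\max}\|f\|_{\infty}$ of $\xi_i$—should deliver the required trace bound and produce $K$ as a function of $\alpha_{\max}$ alone.
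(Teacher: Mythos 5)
Your skeleton coincides with the paper's: the $t=0$ estimate via the uniform slope $\partial_s\mathcal F_i\geq 2$ of the cost balance (this part of your argument is correct and is essentially the paper's), the identity $\dot\xi_i(t)=\frac{\alpha_i}{2}\bigl(f(\rho_i(t,-1^+))-f(\rho_i(t,1^-))\bigr)$, and the splitting of $\dot\xi_1-\dot\xi_2$ into an $(\alpha_1-\alpha_2)$ piece plus an $\alpha$-weighted trace-difference piece. Two caveats on the middle step: the ``implicit differentiation'' is only formal at the regularity available ($\rho_i\in\L\infty$, $\xi_i$ Lipschitz), and the paper devotes roughly half of its proof to justifying \eqref{eq:dotxi} rigorously through the weak formulation with test functions $\theta(t)\,\psi(x)\,\eta_m(x-\xi_1^m(t))$ and the zero traces at the turning curve; also, your detour through Lemma~\ref{lem:BV-reg} is unnecessary — strong boundary traces exist by genuine nonlinearity of $f$ alone, and Proposition~\ref{prop:xidot} is meant to hold without any $\mathbf{BV}$ hypothesis on $\overline\rho_1$ (only Proposition~\ref{prop:rho-of-xi} needs it).

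The genuine gap is in the final step, which is the heart of the proposition. The tool you invoke — $\L1$-contraction/stability for the moving-boundary IBVPs on the half-domains $\{\pm(x-\xi_i(t))>0\}$ — cannot deliver the bound on $\int_0^\tau|f(\rho_1(s,\pm1^\mp))-f(\rho_2(s,\pm1^\mp))|\,{\rm d}s$ as stated, for two reasons. First, $\rho_1$ and $\rho_2$ solve these IBVPs on \emph{different} moving domains (since $\xi_1\not\equiv\xi_2$); rectifying one domain onto the other introduces a flux perturbation proportional to $\dot\xi_1-\dot\xi_2$, i.e., exactly the quantity you are trying to estimate, so the argument is circular. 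Second, even on a common domain, $\L1$-contraction controls $\|\rho_1(t,\cdot)-\rho_2(t,\cdot)\|_{\L1}$, not the time-integrated boundary \emph{flux} difference. The paper's resolution is a localization that bypasses the turning curves entirely: set $\Lambda\geq\max\{\|f'\|_{\L\infty},\|\dot\xi_1\|_{\L\infty},\|\dot\xi_2\|_{\L\infty}\}$ and work in the triangle $\mathcal T^*$ with vertices $(x^*,0)$, $(1,0)$, $(1,\tau^*)$, where $x^*=\max\{\xi_1(0),\xi_2(0)\}$ and $\tau^*=(1-x^*)/\Lambda$. By finite speed of propagation this region is causally disconnected from both turning curves, so there both densities solve the \emph{same} autonomous law $\rho_t+f(\rho)_x=0$; the Kato inequality pushed to the boundary of $\mathcal T^*$, combined with the BLN structure of the outflow condition at $x=1$ (which yields $\sign(\rho_1-\rho_2)(f(\rho_1)-f(\rho_2))=|f(\rho_1)-f(\rho_2)|$ on the trace), gives directly $\int_0^{\tau^*}|f(\rho_1(t,1^-))-f(\rho_2(t,1^-))|\,{\rm d}t\leq\|\overline\rho_1-\overline\rho_2\|_{\L1([x^*,1])}$, and symmetrically at $x=-1$. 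Finally, for $\tau$ to depend only on $\max\{\alpha_1,\alpha_2\}$ one must check that $\xi_i(0)\in[-1+\delta,1-\delta]$ with $\delta$ depending only on $\max\{\alpha_1,\alpha_2\}$ and $\rho_{\max}$ (a consequence of $c_i\in[1,1+\max\{\alpha_1,\alpha_2\}\rho_{\max}]$ and \eqref{e:cost0}); your sketch leaves this point unaddressed.
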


\begin{prop}\label{prop:rho-of-xi}
{Using the same notation as in Theorem~\ref{t:WS-stability}, if} $\overline{\rho}_1$ is a $ \mathbf{BV}$ function, then for a.e.\ $t>0$ there holds
\begin{align}\nonumber
&\left\| \rho_{1}(t,\cdot\,)- \rho_{2}(t,\cdot\,)\right\|_{\L1(\mathfrak{C})} 
\\\leq{}&\left\|\overline{\rho}_{1}-\overline{\rho}_{2}\right\|_{\L1(\mathfrak{C})}
+2 \, C_1(t) \left(|\xi_{1}(0)-\xi_{2}(0)|+ \int_0^t |\dot{\xi}_{1}(s)-\dot{\xi}_{2}(s)| \, {\rm{d}} s \right),
\label{eq:rho-estim}
\end{align}
where {$C_1(t) := \sup \left\{\TV\left( \rho_{1}(s,\cdot\,)\right) : s \in[0,t]\right\}$.}
\end{prop}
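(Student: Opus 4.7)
The plan is to eliminate the mismatch between the two turning curves by introducing the horizontal shift
\begin{equation*}
\tilde{\rho}_1(t,x) := \rho_1\bigl(t,\, x + \xi_1(t) - \xi_2(t)\bigr),
\end{equation*}
which translates the interface of $\rho_1$ from $\xi_1(t)$ onto $\xi_2(t)$. A direct distributional computation will show that $\tilde{\rho}_1$ satisfies
\begin{equation*}
\partial_t \tilde{\rho}_1 + \partial_x\bigl(\sign(x - \xi_2(t))\, f(\tilde{\rho}_1)\bigr) = \bigl(\dot{\xi}_1(t) - \dot{\xi}_2(t)\bigr)\,\partial_x \tilde{\rho}_1,
\end{equation*}
and that the well-separation $\rho_1(t,\xi_1(t)^\pm) = 0$ transfers to $\tilde{\rho}_1(t,\xi_2(t)^\pm) = 0$. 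Hence $\tilde{\rho}_1$ and $\rho_2$ are both well-separated entropy solutions of the \emph{same} discontinuous-flux conservation law with flux $\sign(x - \xi_2(t))\, f(\cdot)$, differing only by the source term on the right-hand side for $\tilde{\rho}_1$; by Lemma~\ref{lem:BV-reg}, $\tilde{\rho}_1(s,\cdot) \in \mathbf{BV}(\mathbb{R})$ with $\TV(\tilde{\rho}_1(s,\cdot)) = \TV(\rho_1(s,\cdot)) \leq C_1(t)$ for all $s \in [0,t]$.

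Next I will run Kruzhkov's doubling of variables between $\tilde{\rho}_1$ and $\rho_2$. The key observation is that both solutions now have their traces vanishing at the \emph{same} interface $x = \xi_2(t)$: when the Kruzhkov constant $\kappa$ is taken equal to the trace of the other solution at $(t,\xi_2(t))$, one has $\kappa = 0$, and so the non-classical boundary term $2\int f(\kappa)\,\varphi(t,\xi_2(t))\,\mathrm{d}t$ in \eqref{e:entro-bis} drops out. The classical Kruzhkov doubling then applies separately on each half-line $x \lessgtr \xi_2(t)$, with the source term handled through a Duhamel-type correction bounded in terms of $\TV(\tilde{\rho}_1)$. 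This yields
\begin{equation*}
\|\tilde{\rho}_1(t,\cdot) - \rho_2(t,\cdot)\|_{\L1(\mathbb{R})} \leq \|\tilde{\rho}_1(0,\cdot) - \rho_2(0,\cdot)\|_{\L1(\mathbb{R})} + C_1(t)\int_0^t |\dot{\xi}_1(s) - \dot{\xi}_2(s)|\,\mathrm{d}s.
\end{equation*}

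To conclude, I combine this with the classical shift-in-$\L1$ bound $\|\rho_1(\tau,\cdot) - \tilde{\rho}_1(\tau,\cdot)\|_{\L1(\mathbb{R})} \leq |\xi_1(\tau) - \xi_2(\tau)|\,\TV(\rho_1(\tau,\cdot)) \leq C_1(t)\,|\xi_1(\tau) - \xi_2(\tau)|$ applied at $\tau = t$ and $\tau = 0$. A triangle inequality together with $|\xi_1(t) - \xi_2(t)| \leq |\xi_1(0) - \xi_2(0)| + \int_0^t |\dot{\xi}_1 - \dot{\xi}_2|\,\mathrm{d}s$ then delivers exactly \eqref{eq:rho-estim}. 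The hard part, and where the well-separation hypothesis is indispensable, is the cancellation of the non-classical boundary terms in the Kruzhkov doubling: a direct comparison of $\rho_1$ and $\rho_2$ without the shift would leave residual contributions of the form $2\int f(\rho_j(t,\xi_i(t)))\,\mathrm{d}t$ with $i\neq j$, which are not controlled by $|\xi_1-\xi_2|$ alone. Shifting aligns the two interfaces so that these contributions collapse to $f(0) = 0$, thereby reducing the problem to a controlled variant of the standard discontinuous-flux stability theory (cf.~\cite{AndrLagoTakaSeguin14, SyllaNHM}).
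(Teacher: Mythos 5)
Your proposal is correct and follows essentially the same route as the paper's proof: the same shift $\rho_1(t,x+\xi_1(t)-\xi_2(t))$ aligning the two interfaces, the same continuous-dependence-on-the-flux estimate with error $C_1(t)\int_0^t|\dot{\xi}_1-\dot{\xi}_2|\,{\rm d}s$ (the paper keeps the perturbation $(\dot{\xi}_2-\dot{\xi}_1)\rho$ inside the flux rather than as a source, but this is the same computation), the same use of well-separation to make the interface contributions vanish, and the same triangle-inequality assembly producing the factor $2\,C_1(t)$. No substantive differences to report.
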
	

{The proof of Theorem~\ref{t:WS-stability} uses a stop-and-restart procedure, which requires the following result, adapted from \cite{ARS2023}, along with its straightforward corollary. To streamline the claims in the subsequent two statements, whenever we refer to a ``solution'' of a discontinuous-flux scalar conservation law in the spatial domain $\mathfrak{C}$ or $\mathbb{R}$, it is understood in the sense of the entropy inequalities \eqref{d:entro-bis} for non-negative test functions $\varphi \in \Cc\infty$, supported in the open space-time domain where the PDE is considered.
\begin{prop}{(adapted from \cite[Prop.~31]{ARS2023})}
\label{prop:open-end}
Let $\xi\in  \mathbf{Lip}([0, \infty);\mathfrak{C})$ be given.
For a function $\overline{\rho} \colon \mathfrak{C} \to [0, \rho_{\max}]$, we identify $\overline{\rho}$ with its extension to $\mathbb{R}$ by assigning it the value zero on $\mathbb{R}\setminus \mathfrak{C}$. 
Then the following problems are equivalent:
\begin{equation}
	\label{e:IBVP}
	\left\{\begin{array}{@{}l@{\qquad}l@{\ }l@{}}
		\rho_{t}+\bigl(\sign\left(x-\xi(t)\right) f( \rho )\bigr)_{x} = 0, &x\in \mathfrak{C},&t>0,\\
		\rho (0,x) = \overline{\rho}(x), &x\in \mathfrak{C},\\
		\rho(t,\pm 1)=0, &&t>0,
	\end{array}\right.
\end{equation}
where the boundary condition is understood in the Bardos-LeRoux-N\'ed\'elec (BLN) sense (see \cite{AndrSbihi, MR542510}), and
\begin{equation}
	\label{e:IVP}
	\left\{\begin{array}{@{}l@{\qquad}l@{\ }l@{}}
		\rho_{t}+\bigl(\sign\left(x-\xi(t)\right) \, f( \rho )\bigr)_{x} = 0, &x\in \mathbb{R},&t>0,\\
		\rho (0,x) = \overline{\rho}(x), &x\in \mathbb{R}.
	\end{array}\right.
\end{equation}
More precisely, given $\overline{\rho}$, any solution to  Problem~\eqref{e:IBVP} can be seen as the restriction to $x\in \mathfrak{C}$ of a solution to Problem~\eqref{e:IVP}; reciprocally, 
any solution to Problem~\eqref{e:IVP} can be seen as an extension to $x\in \mathbb{R}$ of a solution to Problem~\eqref{e:IBVP}.
\end{prop}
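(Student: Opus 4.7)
The plan is to carry out a classical ``extension-by-zero $\leftrightarrow$ homogeneous BLN'' argument at each endpoint $x=\pm1$ separately. I would first stress the key observation: since $\xi(t)\in\mathfrak C$ is uniformly bounded away from $\pm1$ on any compact time interval, the only flux discontinuity sits strictly inside $\mathfrak C$ and plays no role near the boundary. In a neighborhood of $x=1$ the flux reduces to $+f(\rho)$ and near $x=-1$ to $-f(\rho)$, both continuous concave LWR-type fluxes vanishing at $\rho=0$. The argument then amounts to the classical equivalence performed independently at each endpoint.

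For the implication \eqref{e:IVP} $\Rightarrow$ \eqref{e:IBVP}, I would let $\rho$ solve the IVP on $\mathbb R$ and restrict to $\mathfrak C$. The interior entropy inequalities and the initial condition are immediate, since any admissible test function for \eqref{e:IBVP} extends by zero to an admissible test function for \eqref{e:IVP}. For the BLN boundary condition at $x=1$, I would use the strict non-linearity of $f$ (the zero-measure assumption on $\{f'=0\}$) to invoke \cite{MR1869441} and extract a strong one-sided trace $\rho(t,1^-)$ for a.e.\ $t$. Plugging into \eqref{e:entro-bis} test functions concentrating on a vanishing left-neighborhood of $x=1$ and passing to the limit would yield
\[
\sign(\rho(t,1^-))\bigl(f(\rho(t,1^-))-f(\kappa)\bigr)\ge 0 \quad \text{for all }\kappa\in[0,\rho(t,1^-)],
\]
which, since $f(0)=0$, is exactly the BLN condition with boundary value $u_b=0$. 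The analogous computation at $x=-1$, with the reflected flux $-f(\rho)$, gives the BLN condition at the left endpoint.

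For the reverse implication \eqref{e:IBVP} $\Rightarrow$ \eqref{e:IVP}, I would extend the IBVP solution by zero outside $\mathfrak C$. On each exterior piece the PDE holds trivially because $\rho\equiv0$ and $f(0)=0$ annihilate every entropy term. What requires work is the verification of \eqref{e:entro-bis} against test functions that straddle $x=\pm1$. Splitting such an integral into its exterior and interior contributions, applying the interior entropy inequality, and integrating by parts would reduce everything to a single boundary term at $x=\pm1$ whose sign is precisely controlled by the homogeneous BLN inequality; this is the standard Dubois-LeFloch characterization of BLN as entropy admissibility of the stationary jump to the boundary value.

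The main obstacle is the careful trace/limiting step at the two endpoints in both directions. Both are direct transcriptions of the proof given in \cite[Prop.~31]{ARS2023}; the only aspect specific to the Hughes setting, namely the presence of a moving flux discontinuity at $x=\xi(t)$, is entirely harmless because $\xi$ stays strictly interior to $\mathfrak C$, so the boundary analysis sees only a classical concave LWR flux and the standard equivalence applies verbatim at $x=1$ and at $x=-1$.
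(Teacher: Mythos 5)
The paper does not actually reprove this statement (it is imported from \cite[Prop.~31]{ARS2023}), so the only question is whether your argument is sound, and there is a genuine gap in the direction \eqref{e:IBVP} $\Rightarrow$ \eqref{e:IVP}. The extension of an IBVP solution to a solution of \eqref{e:IVP} is \emph{not} the extension by zero: only the \emph{initial datum} is extended by zero, while the solution itself becomes nontrivial on $\mathbb{R}\setminus\mathfrak{C}$ because the evacuated mass keeps propagating outward. A solution of \eqref{e:IBVP} loses mass through $x=\pm1$ (this is the whole point of the evacuation model), whereas any solution of \eqref{e:IVP} conserves $\int_{\mathbb{R}}\rho\,{\rm d}x$. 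Concretely, taking $\kappa=0$ and $\kappa=\rho_{\max}$ in \eqref{e:entro-bis} with a nonnegative test function straddling $x=1$ and supported in $\{x>\xi(t)\}$, the zero extension would have to satisfy $\int_0^{\infty} f\bigl(\rho(t,1^-)\bigr)\,\varphi(t,1)\,{\rm d}t=0$, i.e.\ a vanishing flux trace at the exit; but the homogeneous BLN condition at an outflow boundary only forces $f\bigl(\rho(t,1^-)\bigr)=\max_{k\in[0,\rho(t,1^-)]}f(k)$ (a subcritical trace), not $f\bigl(\rho(t,1^-)\bigr)=0$. So your ``single boundary term whose sign is controlled by BLN'' is in fact a term that must vanish identically and does not. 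The correct extension is obtained by solving the exterior problems on $(1,\infty)$ and $(-\infty,-1)$ fed by the outgoing boundary fluxes, or equivalently by solving \eqref{e:IVP} with the zero-extended datum and identifying its restriction to $\mathfrak{C}$ with the IBVP solution via uniqueness.

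The forward direction is also under-argued. Concentrating test functions on a left neighbourhood of $x=1$ yields only the entropy boundary-flux identity for the restriction; it does not by itself produce the BLN inequality with datum $0$. Note that your sketch never uses the hypothesis $\overline{\rho}\equiv0$ on $\mathbb{R}\setminus\mathfrak{C}$, yet without it the conclusion is false: if the IVP datum carried a jam $\rho\equiv\rho_{\max}$ on $(1,2)$, the restriction to $\mathfrak{C}$ would violate the zero BLN condition at $x=1$ for small times. What is really needed is the structural fact that the exterior regions start empty and are governed by the unobstructed fluxes $\pm f$, so that the traces $\rho(t,\pm1^{\mp})$ stay in the set of states admissible for the homogeneous BLN condition; this global use of the exterior data is the actual content of \cite[Prop.~31]{ARS2023} and cannot be replaced by a purely local trace computation. (A minor additional point: BLN is strictly weaker than entropy admissibility of the stationary jump to the boundary value, since the latter would also impose the Rankine--Hugoniot relation $f(\rho^\tau)=f(0)$, so the ``Dubois--LeFloch as admissible stationary jump'' shortcut is not available here.)
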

Now, let $\mathbbm{1}_{\mathfrak{C}}$ be the characteristic function of $\mathfrak{C}$, i.e., the function equal to $1$ on $\mathfrak{C}$ and equal to $0$ elsewhere. 

\begin{cor}\label{cor:restart}
Given a solution $(\rho,\xi)$ of the one-dimensional Hughes model in the sense of Definition~\ref{d:entro-bis}, and given $t_0>0$, the restriction $\rho|_{[t_0,\infty)\times\mathfrak{C}}$ coincides with the function $(t,x)\mapsto r(t-t_0,x)$, where $r$ is the\footnote{{Given a Lipschitz continuous turning curve, the solution $r$ of \eqref{eq:t=t0} is unique, see \cite{AndrGirard2024}.}} solution of the problem
\begin{equation}\label{eq:t=t0}
\left\{\begin{array}{@{}l@{\quad}l@{\ }l@{}}
r_{t}+\bigl(\sign\left(x-\xi(t+t_0)\right) f( r )\bigr)_{x} = 0,& t > 0, &x \in \mathbb{R},
\\[3pt]
r(0,x)={\rho}(t_0,x)\mathbbm{1}_{\mathfrak{C}}(x),&& x\in \mathbb{R}.
\end{array}\right.
\end{equation}
\end{cor}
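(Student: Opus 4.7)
The plan is to deduce the corollary by applying Proposition~\ref{prop:open-end} twice, sandwiching a standard restart (time-shift) argument for entropy solutions in between. Roughly speaking, the open-end formulation on $\mathbb{R}$ is equivalent to the Dirichlet IBVP on $\mathfrak{C}$; once we are working with the IBVP, restarting at $t_0$ is routine, and then the same equivalence propagates the restarted IBVP back to an IVP on $\mathbb{R}$, which is exactly problem~\eqref{eq:t=t0}.

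First I would note that, by Definition~\ref{d:entro-bis}, the given pair $(\rho,\xi)$ makes $\rho$ an entropy solution to the IVP~\eqref{e:IVP} with turning curve $\xi$, so Proposition~\ref{prop:open-end} yields that the restriction $\tilde\rho := \rho|_{[0,\infty)\times\mathfrak{C}}$ is an entropy solution to the IBVP~\eqref{e:IBVP} on $\mathfrak{C}$ with zero BLN-Dirichlet conditions at $x=\pm 1$. Next I would perform the restart: the time-shifted function $(s,x)\mapsto \tilde\rho(s+t_0,x)$ satisfies, on $(s,x)\in(0,\infty)\times\mathfrak{C}$, the entropy inequalities with the time-translated flux field $\sign(x-\xi(s+t_0))f(\cdot)$, the BLN boundary condition at $x=\pm 1$ (which is imposed a.e.\ in time and is therefore unaffected by translation), and the initial trace $\tilde\rho(t_0,\cdot)=\rho(t_0,\cdot)|_{\mathfrak{C}}$. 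The $L^1$-in-time continuity contained in the definition of entropy solution guarantees that this trace is assumed in the $\L1(\mathfrak{C})$ sense.

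Then I would apply Proposition~\ref{prop:open-end} in the converse direction: extending $\tilde\rho(\cdot+t_0,\cdot)$ by $0$ to $\mathbb{R}\setminus\mathfrak{C}$ produces an entropy solution of the corresponding IVP on $\mathbb{R}$ with turning curve $s\mapsto\xi(s+t_0)$ and initial datum $\rho(t_0,x)\mathbbm{1}_\mathfrak{C}(x)$; this is precisely the Cauchy problem~\eqref{eq:t=t0}. Since the turning curve $s\mapsto\xi(s+t_0)$ is Lipschitz, the uniqueness result for \eqref{eq:t=t0} recalled in the footnote (see~\cite{AndrGirard2024}) forces this extension to coincide with $r$. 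Reading the identification back on $[0,\infty)\times\mathfrak{C}$ gives $\rho(t,x)=r(t-t_0,x)$ for $t\ge t_0$ and $x\in\mathfrak{C}$, which is the desired conclusion.

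The only non-cosmetic point is the restart step: one must check that time-shifting preserves the entropy inequalities on $\mathfrak{C}$, including the non-classical-shock term $2\int f(\kappa)\,\varphi(t,\xi(t))\,{\rm{d}} t$ present in \eqref{e:entro-bis}. This is essentially automatic because all the integrands are local in $t$ and the inequalities are tested against $\varphi\in \Cc\infty$ supported in the open space-time domain; taking a test function in $(t_0,\infty)\times\mathfrak{C}$ and changing variables $t\mapsto s+t_0$ produces exactly the entropy inequalities required for the shifted problem, while the assumed $\C0([0,\infty);\L1(\mathbb{R}))$ regularity of $\rho$ ensures the new initial trace is attained. With this detail in place, the rest of the argument is purely an invocation of Proposition~\ref{prop:open-end} and the uniqueness for \eqref{eq:t=t0}, so I do not expect any serious obstacle.
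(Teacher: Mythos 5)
Your argument is correct and follows essentially the same route as the paper's proof: time continuity of $\rho$ with values in $\L1$ to make sense of the restart at $t_0$, the flow (time-translation) property of the entropy formulation, the equivalence of Proposition~\ref{prop:open-end} to pass between the IBVP on $\mathfrak{C}$ and the IVP on $\mathbb{R}$ with the shifted turning curve, and the uniqueness of $r$ from the footnote to close the identification. You merely spell out the restart step (change of variables in the entropy inequalities, locality in time of the BLN and non-classical-shock terms) that the paper dismisses as obvious.
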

\begin{proof}
By the time continuity result of~\cite{CancesGallouet} for local entropy solutions of scalar conservation laws, it follows that $\rho_{1}$, $\rho_{2}$ are continuous in time with values in $\L1(\mathfrak{C})$ (cf.~\cite{SyllaNHM, Towers-BV} for details, in similar situations).
Then it is obvious that the solution of \eqref{e:IBVP} fulfils the flow property, i.e., the value of the solution at any point $t_0>0$ can be taken as the initial datum for solving the equation on $[t_0,\infty)$, and such resolution provides the same density $\rho$ in $[t_0,\infty)\times \mathfrak{C}$. Then we apply Proposition~\ref{prop:open-end} with $\xi(\cdot+t_0)$ in the place of $\xi(\cdot)$ and the initial datum $\bar\rho(\cdot):={\rho}(t_0,\cdot\,)\mathbbm{1}_{\mathfrak{C}}(\cdot)$.  
\end{proof}}
\begin{proof}[Proof of Theorem~\ref{t:WS-stability}]
The result for $t \in [0,\tau]$ follows by simply plugging the bounds \eqref{eq:integral-dotxi-estim}, \eqref{eq:xi(0)-estim} of Proposition~\ref{prop:xidot} into estimate \eqref{eq:rho-estim} of Proposition~\ref{prop:rho-of-xi}, having in mind Lemma~\ref{lem:BV-reg} that ensures the requested $ \mathbf{BV}$ bound on $\rho_1$.

In order to achieve the global in time result, recall that $\tau$ {in \eqref{eq:integral-dotxi-estim}} depends only on $\max\{\alpha_{1},\alpha_{2}\}$. 
Then we bootstrap the argument, by means of the stop-and-restart procedure, taking $t=k\,\tau$ for initial time, $k \in\mathbb{N}$. 
{Indeed, by Corollary~\ref{cor:restart}, we know that given $\xi_{1,2}$, the functions $\rho_{1,2}|_{\mathfrak{C}}(t,\cdot\,)$ for $t\geq k \, \tau$ can be obtained by translating by $t_0=k\,\tau$ in time and restricting to $x\in\mathfrak{C}$ the solutions of \eqref{eq:t=t0} (with the given turning curves $\xi_{1,2}$, respectively)  and with initial data  $\rho_{1,2}|_{\mathfrak{C}}(k \, \tau,\cdot\,)$ extended by zero to $\mathbb{R}\setminus\mathfrak{C}$.
Applying the argument on $[0,\tau]$ to the time-translated problem \eqref{eq:t=t0}, we have the estimate}
\begin{align*}
\left\| \rho_{1}(t,\cdot\,)- \rho_{2}(t,\cdot\,)\right\|_{\L1(\mathfrak{C})} \leq K\left(|\alpha_{1}-\alpha_{2}| +\left\| \rho_{1}(k \, \tau,\cdot\,)- \rho_{2}(k \, \tau,\cdot\,)\right\|_{\L1(\mathfrak{C})}\right)
\\
\forall t \in[k \, \tau,(k+1) \, \tau]&,
\end{align*}
where we can recursively plug the estimate of $\left\| \rho_{1}(k \, \tau,\cdot\,) - \rho_{2}(k \, \tau,\cdot\,) \right\|_{\L1(\mathfrak{C})}$.
Thus we extend the control of $\left\| \rho_{1}(t,\cdot\,)- \rho_{2}(t,\cdot\,)\right\|_{\L1(\mathfrak{C})}$ of the form \eqref{eq:stability} to any time $t>0$, with the constant $C$ that grows with $t$ at most like $\max\{1,K\}^{m+1}$, being {$m:=\lfloor t/\tau\rfloor$.}
\end{proof}	 

\begin{proof}[Proof of Proposition~\ref{prop:rho-of-xi}]
The proof is analogous to the arguments that can be found in~\cite{AndrLagoTakaSeguin14, DelleMonacheGoatin17, SyllaNHM}, regarding continuous dependence of solutions of discontinuous-flux conservation laws with respect to a moving interface. 
For application of the argument to the Hughes model at hand, it is important to rely upon the ``open-end formulation'' of Definition~\ref{d:entro-bis}, because changes of variable leaving invariant the half-plane $t>0,\,x\in\mathbb{R}$ (but not leaving invariant the band $t>0,\,x\in \mathfrak{C}$) will be instrumental.

In the subdomains $\{(t,x) : \pm (x-\xi_{1}(t))>0\}$ the function $\rho_{1}$ solves the conservation laws $\partial_t \rho_{1}+\partial_x(\pm f( \rho_{1}))=0$ in the standard Kruzhkov entropy sense. 
It is readily checked, by a change of variable in the entropy formulation (written with Lipschitz continuous test functions), that in the subdomains $\Theta_\pm:=\{(t,x) : \pm(x-\xi_{2}(t))>0\}$ the translated function $\rho_{3}(t,x):= \rho_{1}(t,x-\xi_{2}(t)+\xi_{1}(t))$ solves the scalar conservation law with translated flux
\begin{equation}\label{eq:auxil-equ}
\partial_t \rho_{3} + \partial_x\bigl( \pm f( \rho_{3}) + (\dot{\xi}_{2}(t) - \dot{\xi}_{1}(t)) \rho_{3} \bigr) = 0
\end{equation}
in the standard Kruzhkov entropy sense. 
Recall that $\rho_{2}$ solves the analogous problem with the fluxes $\pm f( \rho_{2})$ in the same subdomains $\Theta_\pm$. 
We then use the standard estimate of continuous dependence on the flux within the doubling of variables argument~\cite{BouchutPerthame, KarlsenRisebro, Mercier}: for all smooth non-negative $\varphi$ compactly supported in $\Theta_- \cup \Theta_+$, we have
\begin{align}\nonumber
&- \int_0^{\infty} \int_\mathbb{R} \Bigl(
\begin{aligned}[t]
&\varphi_t \left| \rho_{3}(t,x)- \rho_{2} (t,x)\right|
\\{}+{}& \varphi_x \, \sign\bigl(x-\xi_{2}(t)\bigr) \, \sign\bigl( \rho_{3}(t,x)- \rho_{2}(t,x)\bigr)\\
&\times\bigl(f\bigl( \rho_{3}(t,x)\bigr)- f\bigl( \rho_{2}(t,x)\bigr)\bigr)\Bigr) {\rm{d}} x\, {\rm{d}} t
\end{aligned}\\\nonumber
\leq{}& \int_\mathbb{R} \varphi(0,x) \, |\overline{\rho}_{3}(x)-\overline{\rho}_{2}(x)| \, {\rm{d}}x
\\&+ \int_0^{\infty} \left\|\varphi(t,\cdot\,)\right\|_{\L\infty} \, |\dot{\xi}_{2}(t)-\dot{\xi}_{1}(t)| \, \TV\bigl(\rho_{3}(t,\cdot\,)\bigr)\, {\rm{d}} t,
\label{eq:ContDepLocal}
\end{align} 	
where $\overline{\rho}_{3}(x):=\overline{\rho}_{1}(x-\xi_{2}(0)+\xi_{1}(0))$ is the initial condition for the solution $\rho_{3}$ of \eqref{eq:auxil-equ}.

Now, note that for well-separated solutions, it is straightforward to drop the assumption that $\varphi$ is zero in a neighbourhood of the curve $\{(t,x) : x=\xi_{2}(t)\}$. 
Indeed, {take a general} $\varphi \in \Cc{ \infty}([0, \infty) \times \mathbb{R})$ and consider truncated test functions $\varphi_m(t,x) := \varphi(t,x) \, \eta(m \, (x-\xi_{2}^{m}(t))) \in \Cc{ \infty}(\Theta_- \cup \Theta_+) $. 
Here $\xi_{2}^{m}$ is a smooth approximation of $\xi_{2}$ such that $\dot{\xi}_{2}$ is uniformly bounded and $\left\|\xi_{2}-\xi_{2}^{m}\right\|_{\L\infty} \leq 1/m$ (such approximation is constructed by convolution); $\eta \in\C\infty(\mathbb{R})$ is even, $\eta'(z) \geq 0$ for $z>0$, $\eta(z)=1$ for $|z|\geq 1$, and $\eta\equiv 0$ in a neighbourhood of $z=0$. 
Upon substituting $\varphi_m$ into \eqref{eq:ContDepLocal}, the integrals of the terms
\begin{gather*}
m \, \eta'\bigl(m (x-\xi_{2}^{m}(t))\bigr) \, \dot{\xi}_{2}^{m}(t) \left| \rho_{3}- \rho_{2}\right| \varphi,\\
m \, \eta'\bigl(m (x-\xi_{2}^{m}(t))\bigr) \sign\bigl( \rho_{3}(t,x)- \rho_{2}(t,x)\bigr) \, \bigl(f\bigl( \rho_{3}(t,x)\bigr) - f\bigl( \rho_{2}(t,x)\bigr)\bigr) \, \varphi
\end{gather*}
vanish as $m\to \infty$ due to the assumption that $\rho_{1}$, $\rho_{2}$ are well-separated, since it means that the traces of {$\rho_{3}$ and $\rho_{2}$ along the curve $x = \xi_{2}(t)$ are equal to zero almost everywhere.} 
Indeed, one can perform the change of variables $y := m\,(x-\xi_{2}(t))$ in the integrals of these terms. 
The dominated convergence can be applied since $\rho_{2,3}(t,\xi_{2}(t)+\frac{y}{m})\to 0$ pointwise, as $m\to \infty$, while the support of $y\mapsto \eta'\bigl(y+m\,(\xi_{2}(t)-\xi_{2}^{m}(t))\bigr)$ lies within the fixed interval $[-2,2]$, by the choice of $\eta$ and of $\xi_{2}^{m}$.

Then, as in the standard Kruzhkov $\L1$-contraction argument, we can let $\varphi$ converge to the indicator function of $[0,t) \times \mathbb{R}$ and infer
\begin{equation}\label{eq:ContDepGlobal}
\int_\mathbb{R} | \rho_{3}(t,x)- \rho_{2} (t,x)|\, {\rm{d}} x 
\leq \int_\mathbb{R} |\overline{\rho}_{3}(x)-\overline{\rho}_{2}(x)|\, {\rm{d}} x+ C_{1}(t) \int_0^{t} |\dot{\xi}_{1}(s)-\dot{\xi}_{2}(s)|\, {\rm{d}} s,
\end{equation} 
where {$C_1(t) := \sup \left\{\TV\left( \rho_{1}(s,\cdot\,)\right) : s \in[0,t]\right\}$} and we also used the fact that, by construction, $\TV( \rho_{3}(s,\cdot\,))=\TV( \rho_{1}(s,\cdot\,))$.
From the definition of $\rho_{3}$, we also infer
\begin{align*}
\int_\mathbb{R} |\overline{\rho}_{1}(x)-\overline{\rho}_{3}(x)|\, {\rm{d}} x 
& \leq C_{1}(0) \, |\xi_{1}(0)-\xi_{2}(0)|
\leq C_{1}(t) \, |\xi_{1}(0)-\xi_{2}(0)| ,\\
\int_\mathbb{R} | \rho_{1}(t,x)- \rho_{3} (t,x)|\, {\rm{d}} x 
& \leq C_{1}(t) \, |\xi_{1}(t)-\xi_{2}(t)| \\
& \leq C_{1}(t) \left(|\xi_{1}(0)-\xi_{2}(0)|+ \int_0^t |\dot{\xi}_{1}(s)-\dot{\xi}_{2}(s)|\, {\rm{d}} s\right).
\end{align*}
Assembling these bounds with \eqref{eq:ContDepGlobal} via the triangle inequality, we infer the claim of the proposition.
\end{proof}
\begin{proof}[Proof of Proposition~\ref{prop:xidot}]
We start by proving \eqref{eq:xi(0)-estim}. 
It follows from \eqref{e:cost0} that 
\begin{align*}
&\int_{-1}^{\xi_{2}(0)} c_2(\overline{\rho}_{2}(x))\, {\rm{d}} x - \int_{-1}^{\xi_{1}(0)} c_{1}(\overline{\rho}_{1}(x))\, {\rm{d}} x 
\\={}& 
\int^1_{\xi_{2}(0)} c_2(\overline{\rho}_{2}(x))\, {\rm{d}} x - \int^1_{\xi_{1}(0)} c_{1}(\overline{\rho}_{1}(x))\, {\rm{d}} x,
\end{align*}
which can be rewritten as	
\[2 \int_{\xi_{1}(0)}^{\xi_{2}(0)} c_2(\overline{\rho}_{2}(x))\, {\rm{d}} x 
= \int^1_{-1} \sign(x-\xi_{1}(0)) \bigl(c_2(\overline{\rho}_{2}(x))-c_{1}(\overline{\rho}_{1}(x)) \bigr)\, {\rm{d}} x.\]
The choice of $c_i=1+\alpha_i \, \rho$, $i\in\{1,2\}$, now yields
\[2 \, |\xi_{2}(0)-\xi_{1}(0)| \leq \int_{-1}^1 |\alpha_{2} \, \overline{\rho}_{2}(x) - \alpha_{1} \, \overline{\rho}_{1}(x)|\, {\rm{d}} x,\]
which readily leads to \eqref{eq:xi(0)-estim} having in mind that $\left\|\overline{\rho}_{2}\right\|_{\L1(\mathfrak C)} \leq 2 \rho_{\max}$.

\smallskip
Now, let us admit for a while the explicit expression for $\dot{\xi}_{1}$, which can be obtained by a formal calculation. 
Keeping in mind that $c_{1}( \rho_{1}(t,\xi_{1}(t)^\pm))=c_{1}(0)=1$ due to the assumption that $\rho_{1}$ is a well-separated solution, substituting $\partial_x(\pm f( \rho_{1}))$ in the place of $\partial_t \rho_{1}$ for $\pm (x-\xi_{1}(t))>0$, we exhibit the formula
\begin{align}\label{eq:dotxi}
&\dot{\xi}_{1}(t)
= -\frac{\alpha_1}{2} \int_{-1}^1 f( \rho_{1}(t,x))_x \, {\rm{d}} x
= \frac{\alpha_1}{2} \Bigl(f\bigl( \rho_{1}(t,-1^+)\bigr) - f\bigl( \rho_{1}(t,1^-)\bigr)\Bigr)&
&
\end{align}
for a.e.\ $t>0$. 
The demonstration of the claim \eqref{eq:dotxi} is postponed to the end of the proof.

We now exploit the expression \eqref{eq:dotxi} for $\dot{\xi}_{1}$ (and the analogous expression for $\dot{\xi}_{2}$) in order to reach to \eqref{eq:integral-dotxi-estim}, for appropriately defined $\tau>0$ and $K>0$. 
Set
\begin{align*}
&\Lambda:=\max\{ \|f'\|_{\L\infty}, \|\dot{\xi}_{1}\|_{\L\infty}, \|\dot{\xi}_{2}\|_{\L\infty}\},\\
&\begin{aligned}
x_*&:=\min\{\xi_{1}(0),\xi_{2}(0)\},&
x^*&:=\max\{\xi_{1}(0),\xi_{2}(0)\},\\
\tau^*&:=\frac{1-x^*}{\Lambda},&
\tau_*&:=\frac{x_*+1}{\Lambda},&
\tau&:=\min\{\tau^*,\tau_*\}.
\end{aligned}
\end{align*}
Because the costs $c_{1}$, $c_{2}$ take values in $[1,1+\max\{\alpha_{1},\alpha_{2}\} \, \rho_{\max}]$, it is easily seen from \eqref{e:cost0} that $\xi_{1}(0)$, $\xi_{2}(0)$ belong to $[-1+\delta,1-\delta]$ with $\delta=\bigl(1+\frac{1}{2} \, \max\{\alpha_{1},\alpha_{2}\} \, \rho_{\max}\bigr)^{-1}>0$ that only depends on $\max\{\alpha_{1},\alpha_{2}\}$ and on the fixed value $\rho_{\max}$; moreover, in view of \eqref{eq:dotxi}, $\|\dot{\xi}_{1}\|_{\L\infty}$, $\|\dot{\xi}_{2}\|_{\L\infty}$ are bounded by a constant times $\max\{\alpha_{1},\alpha_{2}\}$.
Therefore $\tau>0$ depends only on $\max\{\alpha_{1},\alpha_{2}\}$ (it also depends on $\rho_{\max}$ and other fixed parameters of the model).

Now, let $\mathcal{T}^*$ be the interior of the triangle with vertices $(x^*,0)$, $(1,0)$ and $(1,\tau^*)$.
By the definition of $x^*$ and of $\Lambda$, both $\rho_{1}$ and $\rho_{2}$ verify in $\mathcal{T}^*$ the same homogeneous scalar conservation law with flux $f${, which} is non-affine on any interval.
In this situation, strong traces of $\rho_{1}$, $\rho_{2}$ as $t\to 0^+$ (the initial trace) and as $x\to 1^-$ (the boundary trace) exist, see~\cite{Panov_traces1,Panov_traces2}. 
It follows that, first, the so-called Kato inequality in $\mathcal{T}^*$ is fulfilled:
\begin{align*}
- \iint_{\mathcal{T}^*} \Bigl(| \rho_{1}- \rho_{2}|\varphi_t+ \sign( \rho_{1}- \rho_{2})\bigl(f( \rho_{1})-f( \rho_{2})\bigr)\varphi_x \Bigr)\, {\rm{d}} x \, {\rm{d}} t \leq 0
\\
\forall \varphi \in \Cc\infty(\mathcal{T}^*)&{}.
\end{align*}
Second, one can proceed as in the classical setting of Kruzhkov~\cite{Kruzhkov}, approximating the characteristic function of $\mathcal{T}^*$ by a sequence of $\varphi \in \C \infty_0(\mathcal{T}^*)$; note that, like in~\cite{Kruzhkov}, we have chosen the slope $\Lambda$ of the oblique part of the boundary of $\mathcal{T}^*$ larger than $\|f'\|_{\L\infty}$. 
We find 
\begin{align*}
&\int_{0}^{\tau^*} \sign\bigl( \rho_{1}(t,1^-)- \rho_{2}(t,1^-)\bigr) \left( f( \rho_{1}(t,1^-))-f( \rho_{2}(t,1^-))\right)\, {\rm{d}} t
\\\leq{}&
\int_{x^*}^1 \left|\overline{\rho}_{1}(x)-\overline{\rho}_{2}(x)\right|\, {\rm{d}} x.
\end{align*}
Finally, recalling the BLN interpretation~\cite{MR542510} of the Dirichlet boundary condition (see also~\cite{AndrSbihi}, where the boundary condition is interpreted in terms of monotone subgraphs of the graph of $f$), we point out that
\begin{align*}
&\sign\bigl( \rho_{1}(t,1^-)- \rho_{2}(t,1^-)\bigr) \left(f( \rho_{1}(t,1^-))-f( \rho_{2}(t,1^-))\right)
\\={}& \left|f( \rho_{1}(t,1^-))-f( \rho_{2}(t,1^-))\right|.
\end{align*}
To sum up, we find
\[\int_{0}^{\tau^*} \left|f( \rho_{1}(t,1^-))-f( \rho_{2}(t,1^-))\right|\, {\rm{d}} t \leq \left\|\overline{\rho}_{1}-\overline{\rho}_{2}\right\|_{\L1([x^*,1])}.\]
Further, the same inequality holds with $1^-$, $\tau^*$ and $[x^*,1]$ replaced by $-1^+$, $\tau_*$ and $[-1,x_*]$, respectively.

Recalling \eqref{eq:dotxi}, which we also write for $\dot{\xi}_{2}$ with $\alpha_{2}=(\alpha_{2}-\alpha_{1})+\alpha_{1}$, we finally deduce \eqref{eq:integral-dotxi-estim} under the precise form
\[\int_0^\tau|\dot{\xi}_{1}(s)-\dot{\xi}_{2}(s)|\, {\rm{d}} s \leq \tau\left\|f\right\|_{\L\infty}|\alpha_{1}-\alpha_{2}|+
\frac{\alpha_{1}}{2} \left\|\overline{\rho}_{1}-\overline{\rho}_{2}\right\|_{\L1(\mathfrak{C})}.\]

To conclude the proof, we now turn to the justification of \eqref{eq:dotxi}. 
Recall that by definition, $\xi_{1}$ is Lipschitz continuous, therefore its derivative $\dot{\xi}_{1}$ is defined a.e., and it is enough to establish
\begin{align}\nonumber
2 \int_0^{\infty} \theta(t) \, \dot{\xi}_{1}(t)\, {\rm{d}} t 
= \alpha_1 \int_0^{\infty} \theta(t) \, \bigl(f\bigl( \rho_{1}(t,-1^+)\bigr) - f\bigl( \rho_{1}(t,1^-)\bigr)\bigr) \, {\rm{d}} t
\\\forall \theta \in \Cc{ \infty}((0,\infty))&{}.
\label{eq:xidot-weak}
\end{align}
As a starting point, let us multiply \eqref{e:cost0} by $\dot\theta$ and integrate in time. 
This leads to
\begin{align}\nonumber
0&= \int_0^{\infty} \dot\theta(t) \int_{\mathfrak{C}} \sign\left(x-\xi_{1}(t)\right) \, c_{1}( \rho_{1}(t,x))\, {\rm{d}} x\, {\rm{d}} t
\\\nonumber
&= 
\begin{aligned}[t]
&\int_0^{\infty} \int_{\mathfrak{C}} \dot\theta(t) \, \sign\left(x-\xi_{1}(t)\right)\, {\rm{d}} x \, {\rm{d}} t
\\&
+ \alpha_{1} \int_0^{\infty} \int_{\mathfrak{C}} \dot\theta(t) \, \sign\left(x-\xi_{1}(t)\right) \, \rho_{1}(t,x)\, {\rm{d}} x \, {\rm{d}} t
\end{aligned}
\\\label{eq:I1+Irho}
&=: I_1+\alpha_{1} \, I_ \rho . 
\end{align}
We then consider
a sequence $\{\xi_{1}^m\}_m$ of $\C \infty$ approximations of $\xi_{1}$, which converge uniformly on $[0,\infty)$ while keeping uniformly bounded derivatives $\dot{\xi}_{1}^m$ converging pointwise to $\dot{\xi}_{1}$ (such approximations can be obtained by regularizing $\dot{\xi}_{1}$ by convolution). 
Without loss of generality, we can assume $\left\|\xi_{1}^m-\xi_{1}\right\|_{\L\infty} \leq 1/m$. 
Further, we approximate $\sign(\cdot)$ by functions $\eta_m(z):=\eta(mz)$, where $\eta \in\C\infty(\mathbb{R})$ is non-decreasing and $\eta(z)=\sign(z)$ for $|z|\geq 1$.
Note that $\eta_m(x-\xi_{1}^m(t))\to \sign(x-\xi_{1}(t))$ for all $(t,x)$ such that $x\neq \xi_{1}(t)$.

The dominated convergence readily yields $I_1=\lim_{m\to\infty} I_1^m$, where $I_1^m$ is defined by replacing $\sign(x-\xi_{1}(t))$ by $\eta_m(x-\xi_{1}^m(t))$ in the definition of $I_1$. 
Now we integrate by parts in $t$ in the expression of $I_1^m$. 
For sufficiently large $m$, using the fact that $\xi_{1}$ takes values in $[-1+\delta,1-\delta]$ with some $\delta>0$, as stated here above, we find
\begin{align}\nonumber
I_1=\lim_{m\to \infty} I_1^m
&=\lim_{m\to \infty} \int_0^{\infty} \theta(t) \, \dot{\xi}_{1}^m(t) \int_{\mathfrak{C}} \eta_m'(x-\xi_{1}^m(t))\, {\rm{d}} x\, {\rm{d}} t
\\&=2 \int_0^{\infty} \theta(t) \, \dot{\xi}_{1}(t)\, {\rm{d}} t.
\label{eq:I1}
\end{align}
In order to calculate $I_ \rho $, let us point out that the weak formulation contained in \eqref{e:entro-bis} implies
\begin{equation}\label{eq:weak-formul}
\int_0^{ \infty} \int_{\mathfrak{C}} \bigl( \rho_{1}\, \varphi_t + \sign\left(x-\xi_{1}(t)\right) f( \rho_{1})\, \varphi_x \bigr) \, {\rm{d}} x \, {\rm{d}} t=0
\end{equation}
for all $\varphi \in \Cc \infty\left((0,\infty) \times \mathfrak{C}\right)$.
We then consider in \eqref{eq:weak-formul} the test functions $\varphi_m(t,x)=\theta(t) \, \psi(x) \, \eta_m(x-\xi_{1}^m(t))$ with $\psi \in \Cc{\infty}(\mathfrak{C})$ and $\theta$, $\eta_m$, $\xi_{1}^m$ defined above. 
The choices we made for $\eta_m$ and $\xi_{1}^m$ ensure that the factor $|\eta'(m(x-\xi_{1}^m(t)))|$ has its support included in the set $\{(t,x) : |x-\xi_{1}(t)| \leq \frac 2m\}$; note that this factor is $\L \infty$ bounded uniformly in $m$.
It follows that, as $m\to \infty$, the integrals of
\begin{align*}
&\theta(t) \, \psi(x) \, m \, \eta'\bigl(m\left(x-\xi_{1}^m(t)\right)\bigr) \, \dot{\xi}_{1}^m(t)\, \rho_{1},&
&\theta(t)\,\psi(x)\,m\,\eta'\bigl(m\left(x-\xi_{1}^m(t)\right)\bigr)\,f( \rho_{1})
\end{align*}
vanish due to the assumption of zero traces of $\rho_{1}$ on the curve $x=\xi_{1}(t)$.
This is justified via dominated convergence argument in transformed variables $(y,t)$, $y:=m \, (x-\xi_{1}(t))$, having in mind the above remark on the support of the $\eta'$ factor and the zero trace assumption meaning that $\rho_{1}(t,\xi_{1}(t)+\frac ym)\to 0$ pointwise, as $m\to \infty$.

As $\sign(x-\xi_{1}(t)) \, \eta\bigl(m(x-\xi_{1}(t))\bigr)\to 1$ a.e., with another application of the dominated convergence theorem we infer 
\begin{equation*}
\int_0^{\infty} \int_{\mathfrak{C}} \left( \dot\theta(t) \, \psi(x) \, \sign\left(x-\xi_{1}(t)\right) \, \rho_{1}\, + \theta(t) \,\psi'(x) \, f( \rho_{1})\, \right) \, {\rm{d}} x \, {\rm{d}} t=0.
\end{equation*}
Finally, to reach to $I_ \rho $ we let $\psi$ converge to the characteristic function of $\mathfrak{C}$; since strong boundary traces $\rho_{1}(\cdot,\pm1^\mp)$ exist (see~\cite{Panov_traces2}), we get
\begin{equation}\label{eq:Irho}
I_ \rho = \int_0^{\infty} \theta(t) \left(f( \rho_{1}(t,1^-)) - f( \rho_{1}(t,-1^+))\right) \, {\rm{d}} t.
\end{equation}
Assembling \eqref{eq:I1}, \eqref{eq:Irho} within \eqref{eq:I1+Irho}, we reach to \eqref{eq:xidot-weak} and conclude the proof.
\end{proof}
\begin{proof}[Proof of Lemma~\ref{lem:BV-reg}]
As in the proof of Proposition~\ref{prop:rho-of-xi}, we exploit the open-end formulation and make the Lipschitz change of variables rectifying the turning curve. Given a well-separated solution $(\rho,\xi)$ of the Hughes model, we first consider its restriction on the subdomain $\{(t,x) : x-\xi(t))>0\}$. We consider
\[\tilde \rho^+ \colon (0,\infty)\times (0,\infty) \ni (t,x)\mapsto \rho(t,x-\xi(t)).\]
On its domain of definition, this function fulfils in the Kruzhkov entropy sense the conservation law
\begin{equation}\label{eq:auxil-equ-bis}
\partial_t \tilde\rho^+ + \partial_x\bigl(f( \tilde\rho^+) - \dot{\xi}(t)\tilde \rho^+ \bigr) = 0,
\end{equation}
moreover, it fulfils in the sense of strong trace the initial condition $\tilde\rho^+(0,x)=\bar \rho(x)$ (for $x\in (0,\infty)$) and, due to the well-separation assumption, it fulfils the boundary condition $\tilde\rho^+(t,0^+)=0$ for $t\in (0,\infty)$. 
The function $(t,y)\mapsto \tilde\rho^+(t,y)$ is the unique entropy solution of the above conservation law with the zero Dirichlet boundary condition at $y=0^+$ taken in the {BLN} sense~\cite{MR542510} (see also~\cite{AndrSbihi, MR1869441}); we stress that the $\mathbf{BV}$ regularity need not be used in the definition, see~\cite{ColomboRossi-ProcEdinb2019}. 
The uniqueness of such solution is proved in~\cite{ColomboRossi-ProcEdinb2019} under the $\C1$ regularity assumption on the time-dependence of the flux, but the proof works in the case of the flux $\rho\mapsto f(\rho)-\dot\xi \, \rho$ with the mere $\L\infty$ coefficient $\dot\xi$ (in particular, the doubling of variables argument supports irregular time-dependence of the flux, see~\cite{MR2568808}). 
Due to this uniqueness claim and to the standard passage to the limit arguments, the  $\mathbf{BV}$ bound
{\[\sup \{\TV(\tilde\rho^+(t,\cdot\,)) : t\in[0,T]\} \leq \TV(\bar\rho|_{\mathbb{R}_+})\]}
is inherited from the analogous bound on the approximate solutions (e.g., it is enough to approximate the time-dependence of the flux in a piecewise constant way; one can also consider a full discretization by a finite volume method or by wave-front tracking, cf.~\cite{ColomboRossi-ProcEdinb2019}). 
In the same way, considering $\tilde \rho^-: (0,\infty)\times (-\infty,0) \ni (t,x)\mapsto \rho(t,x-\xi(t)),$ we get the bound 
{\[\sup \{\TV(\tilde\rho^-(t,\cdot\,)) : t\in[0,T]\}\leq \TV(\bar\rho|_{\mathbb{R}_-}).\]}
By construction, for all $t>0$ there holds 
\[ \TV(\tilde\rho(t,\cdot\,))=\TV(\tilde\rho^-(t,\cdot\,))+\TV(\tilde\rho^+(t,\cdot\,))\]
(for each function $\rho,\tilde \rho^+, \tilde \rho^-$ the variation is taken over its spatial domain). 
Whence the result follows.
\end{proof}	

\section{Numerical experiments}
\label{sec:scheme-and-numerics}

We now investigate numerically some properties of the solution to equation \eqref{eq:reformul}, with particular attention to the relations between the parameter $\alpha$ in the affine cost and the evacuation time. The numerical scheme used in the following is inspired by the deterministic \emph{Follow-the-Leader} approximation introduced in~\cite{ARS2023, DiFrancescoFagioliRosiniRussoKRM}. 
{For notation simplicity, we denote $\disint{\mu,\nu} := [\mu,\nu] \cap \Z$ for any $\mu,\nu\in\Z$ with $\mu \leq \nu$.}

{Let $v\in\C2([0,\rho_{\max}];[0,\infty))$ be such that $v_{\max} := v(0) > 0$, $v(\rho_{\max})=0$ and $f(\rho) = \rho \, v(\rho)$.
Assume that $v'(\rho)<0$ and $v'(\rho)+\rho\,v''(\rho)\leq0$ for any $\rho\in[0,\rho_{\max}]$.}
Let $\bar{\rho}$ be in $\L\infty(\mathfrak{C};[0,\rho_{\max}])$. For a fixed integer $n\in \mathbb{N}$, set $N := 2^n$ and $m := 2^{-n} \, M$, where $M:=\|\bar{\rho}\|_{\L1}$.
Denote
\[
\bar{x}_0 := \min\left\{\mbox{supp}(\bar{\rho})\right\}
\]
and recursively define
\begin{align*}
&\bar{x}_i := \inf\left\{ x > \bar{x}_{i-1} \colon \int_{\bar{x}_{i-1}}^x \bar{\rho}(y) \,{\rm{d}} y \geq m\right\},&
&i\in\disint{1,N}.
\end{align*}
The above equation defines the set of $N + 1$ particles' initial positions $-1 \leq \bar{x}_0 < \bar{x}_1 < \ldots < \bar{x}_{N-1} < \bar{x}_N \leq 1$, with the property that the mass of the density $\bar{\rho}$ in each interval $(\bar{x}_i,\bar{x}_{i+1})$ is exactly $m$. We then let the particles evolve according to 
\begin{equation}\label{e:FtL}
\begin{cases}
\dot{x}_0(t)= -v_{\max},\\
\dot{x}_i(t)=-v\left(\frac{m}{x_{i}(t)-x_{i-1}(t)}\right), &i \in \disint{1, I_0},\\
\dot{x}_i(t)=v\left(\frac{m}{x_{i+1}(t)-x_i(t)}\right), &i \in \disint{I_0+1, N-1},\\
\dot{x}_N(t)= v_{\max},\\
x_i(0)=\bar{x}_i,&i \in \disint{0, N},
\end{cases}
\end{equation}
where the index $I_0$ is selected as
\begin{align*}
 I_0:=\max_{{i\in\disint{0,N}}}\Bigl\{\frac{2}{\alpha m} \, x_i(t)< &\#\left\{{j \in \disint{0,N}} \,:\,x_i(t)<x_j(t)<1\right\}\\
& -\#\left\{{j \in \disint{0,N}} \,:\,-1<x_j(t)<x_i(t)\right\} \Bigr\}.
\end{align*}
We consider the corresponding local discrete densities
\begin{align*}
&R_{i+1/2}(t) := \frac{m}{x_{i+1}(t)-x_i(t)},&
&i \in \disint{0,N-1}\setminus\left\{I_0\right\},
\\
&R_{i+1/2}(t) := 0,&
&i \in \left\{-1,I_0,N\right\},
\end{align*}
and the corresponding piecewise constant discrete density $\rho^n : [0,\infty)\times\mathbb{R} \to [0,\rho_{\max}]$ defined by
\begin{equation}\label{e:approsolDPA}
\rho^n(t,x) := \sum_{i=0}^{N-1} R_{i+1/2}(t) \, \mathbbm{1}_{[x_i(t),x_{i+1}(t))}(x).
\end{equation}
The approximate turning point $\xi^n(t)$ is implicitly uniquely defined by
\begin{equation}\label{e:xiDPA}
\int_{-1}^{\xi^n(t)} c\left(\rho^n(t,y)\right) \,{\rm{d}} y = \int^1_{\xi^n(t)} c\left(\rho^n(t,y)\right) \,{\rm{d}} y.
\end{equation}
Clearly $\xi^n(t)$ belongs to $\mathfrak{C}$ for any $t\geq 0$. 

In the simulation we consider the classical choice for the velocity $v$
\begin{equation*}
  v(\rho) := v_{\max}\left(1-\frac{\rho}{\rho_{\max}}\right),
\end{equation*}
and we fix $v_{\max} = \rho_{\max} =1$. With $n=500$ particles we solve system \eqref{e:FtL} by the MATLAB solver ode23s with time step { $\Delta t=0.004$}. We define the microscopic evacuation time as
\begin{equation*}
 T_{\rm mic} = \min_{t\in [0,T]}\bigl\{ \#\left\{{i \in\disint{0,N}} \,:\,x_i(t)\in \mathfrak{C}\bigr\} = 0\right\}.
\end{equation*}

\subsubsection*{{First numerical simulation}}

\begin{figure}[!htb]
\includegraphics[width=.49\linewidth, trim={11mm 4mm 17mm 10mm}, clip]{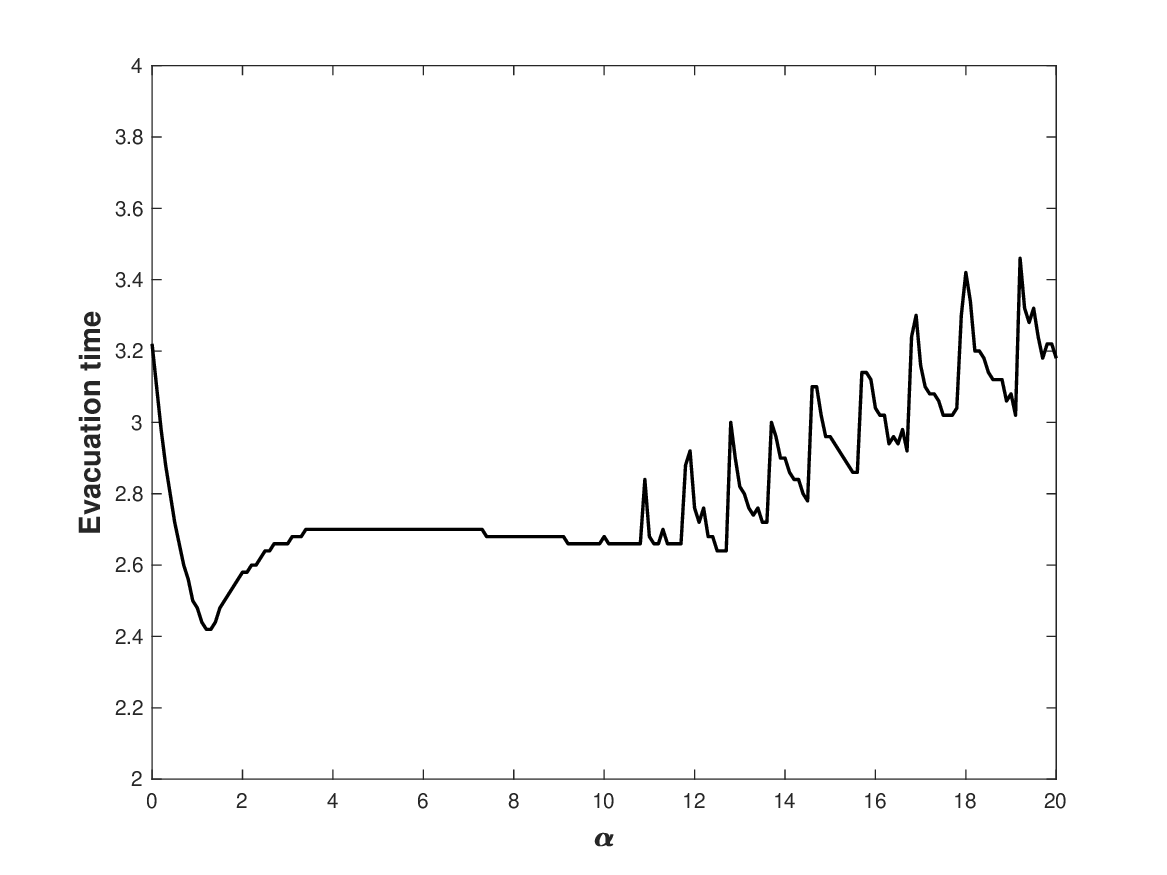}
\hfill
\includegraphics[width=.49\linewidth, trim={11mm 4mm 17mm 10mm}, clip]{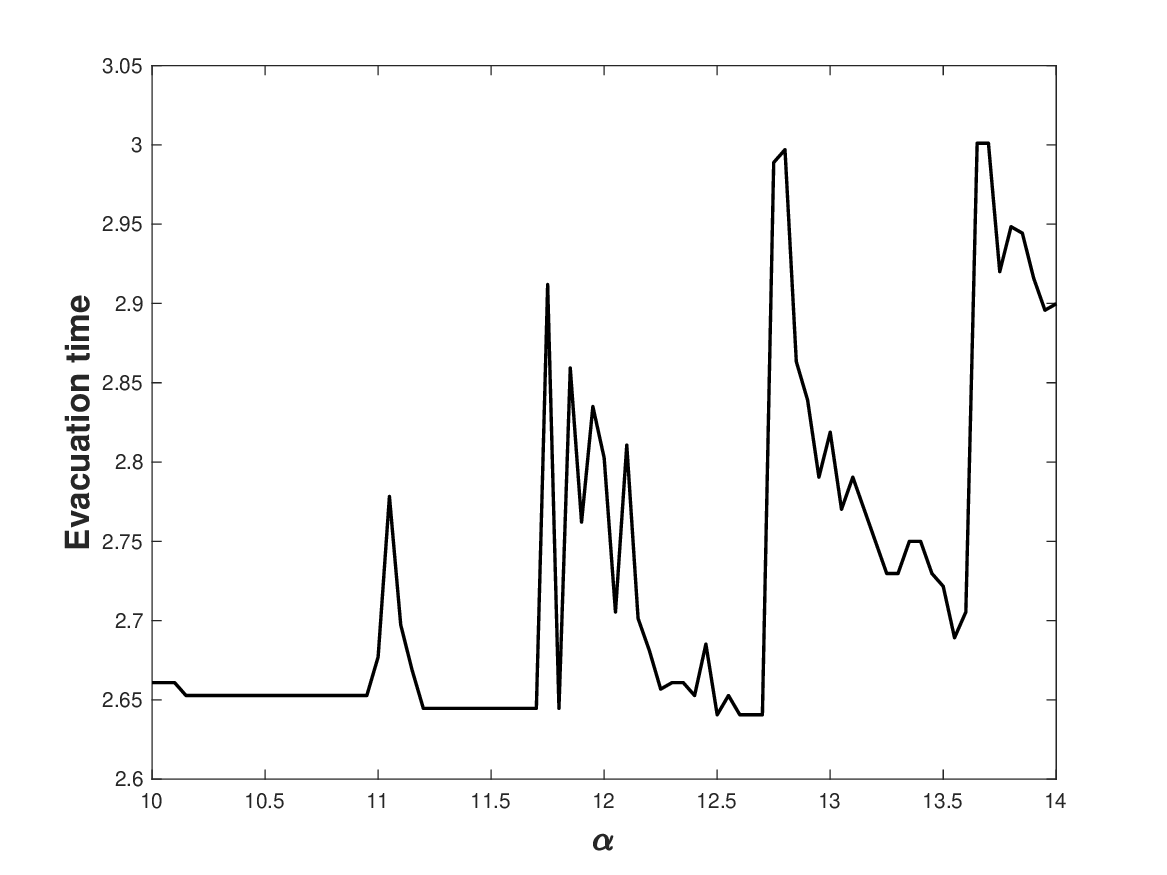}
\caption{$T_{\rm mic}$ as function of $\alpha$ with initial datum \eqref{num_in_con}.
Left: $\alpha \in [0,20]$, step size $\Delta \alpha=0.1$.
Right: $\alpha \in [10,14]$, step size $\Delta \alpha=0.05$.}
\label{fig:alpha}
\end{figure}

Under the initial condition 
\begin{equation}\label{num_in_con}
\bar{\rho}(x) =\begin{cases} 0.9 & \mbox{ if } -1\leq x < -0.5,\\
                          0.9 & \mbox{ if } -0.4\leq x < 0,\\
                          0 & \mbox{ otherwise},
\end{cases}
\end{equation}
we obtain the evolution of $T_{\rm mic}$ as a function of $\alpha$, see \figurename~\ref{fig:alpha}. { We remark that \figurename~\ref{fig:alpha}, left, shows a similar overall behaviour compared to \cite[\figurename~6]{ARS2023}, using the same grid for $\alpha$ with a uniform step size $\Delta \alpha = 0.1$. Note that \figurename~\ref{fig:alpha}, left, exhibits more oscillations compared to \cite[\figurename~6]{ARS2023}, where a fully explicit scheme is used with the same time step. Moreover, more oscillations appear when the step size is reduced to $\Delta \alpha = 0.05$, as shown in \figurename~\ref{fig:alpha}, right.} 

\subsubsection*{{Second numerical simulation}}

\begin{figure}[!htb]
\includegraphics[width=.49\linewidth, trim={11mm 4mm 17mm 4mm}, clip]{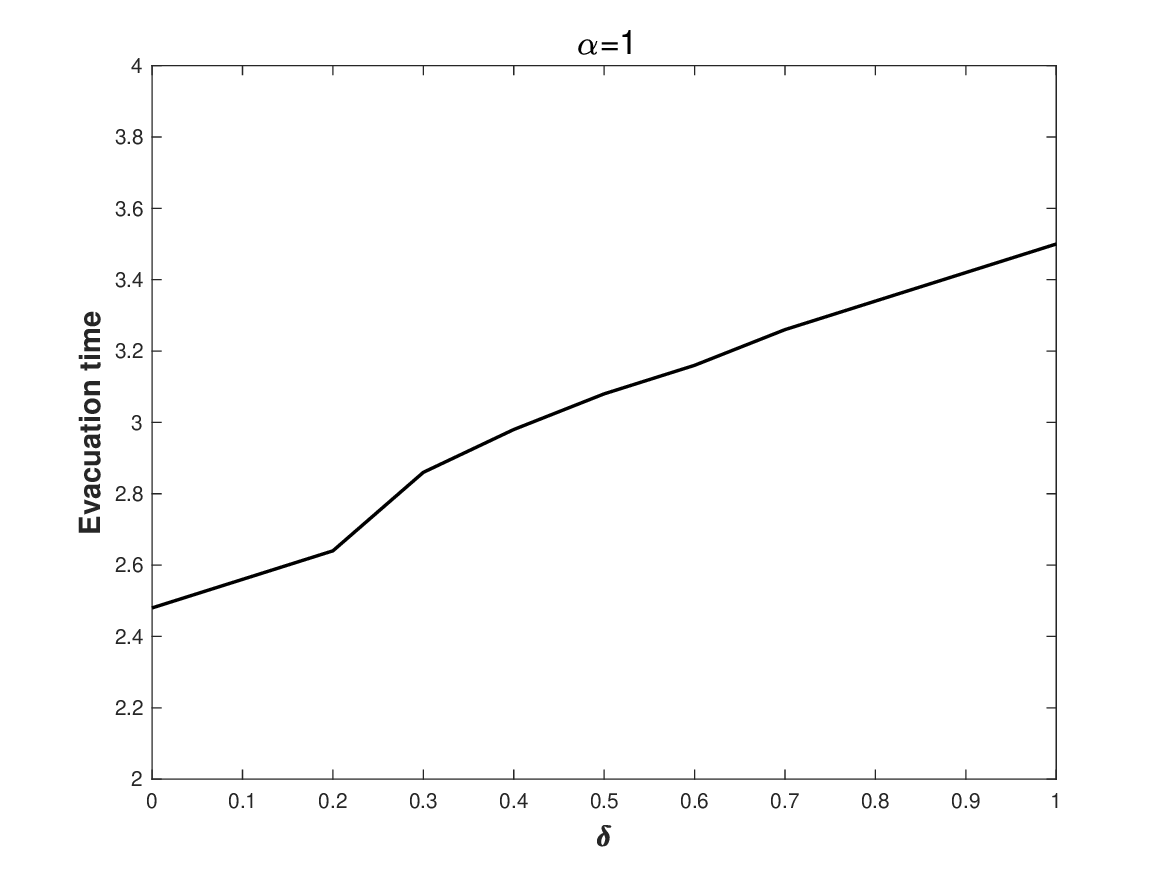}
\hfill
\includegraphics[width=.49\linewidth, trim={11mm 4mm 17mm 4mm}, clip]{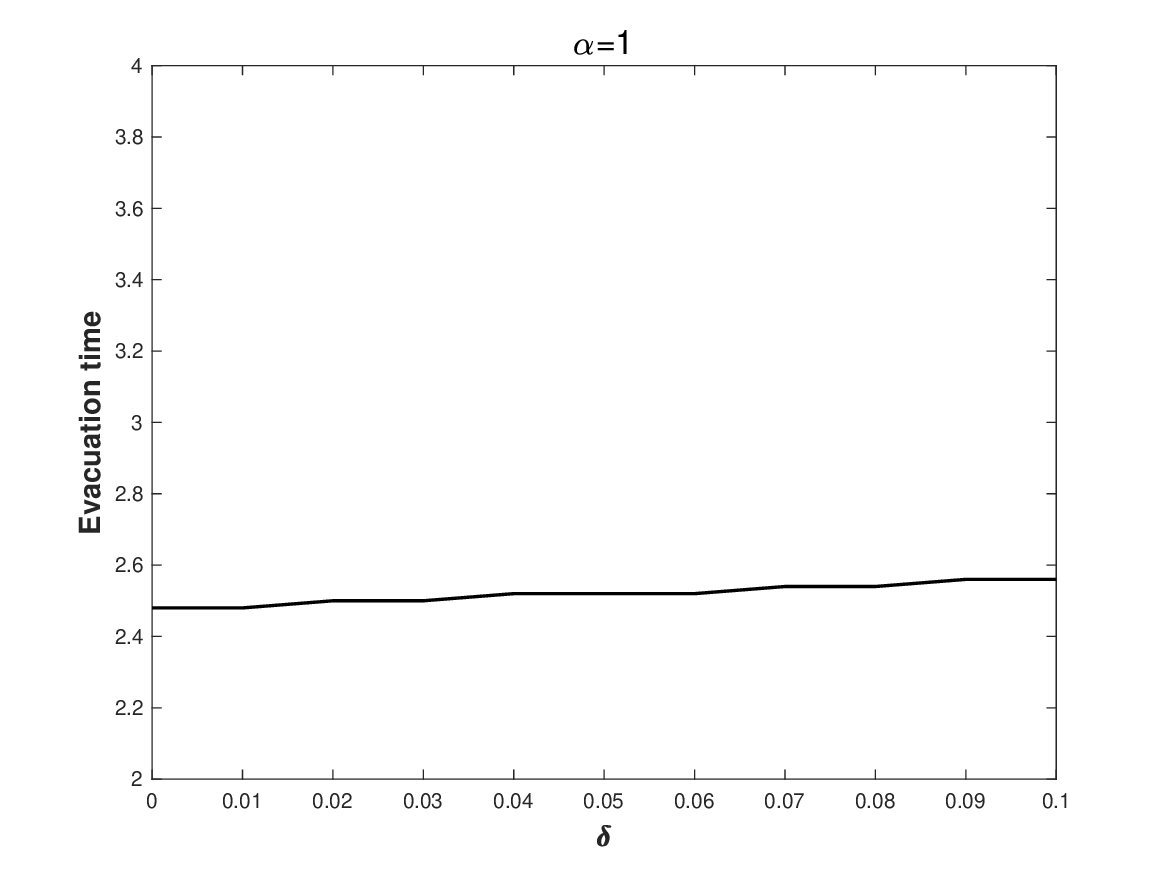}\\
\includegraphics[width=.49\linewidth, trim={11mm 4mm 17mm 4mm}, clip]{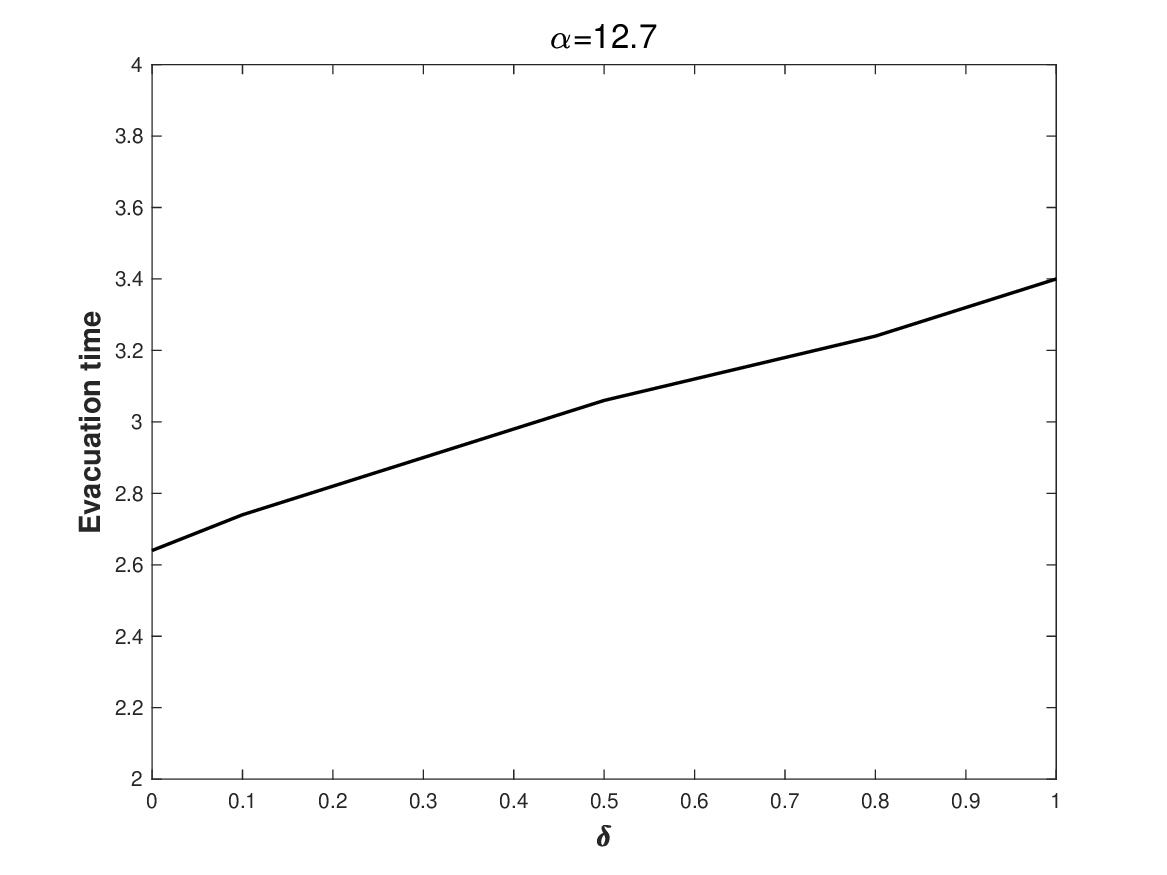}
\hfill
\includegraphics[width=.49\linewidth, trim={11mm 4mm 17mm 4mm}, clip]{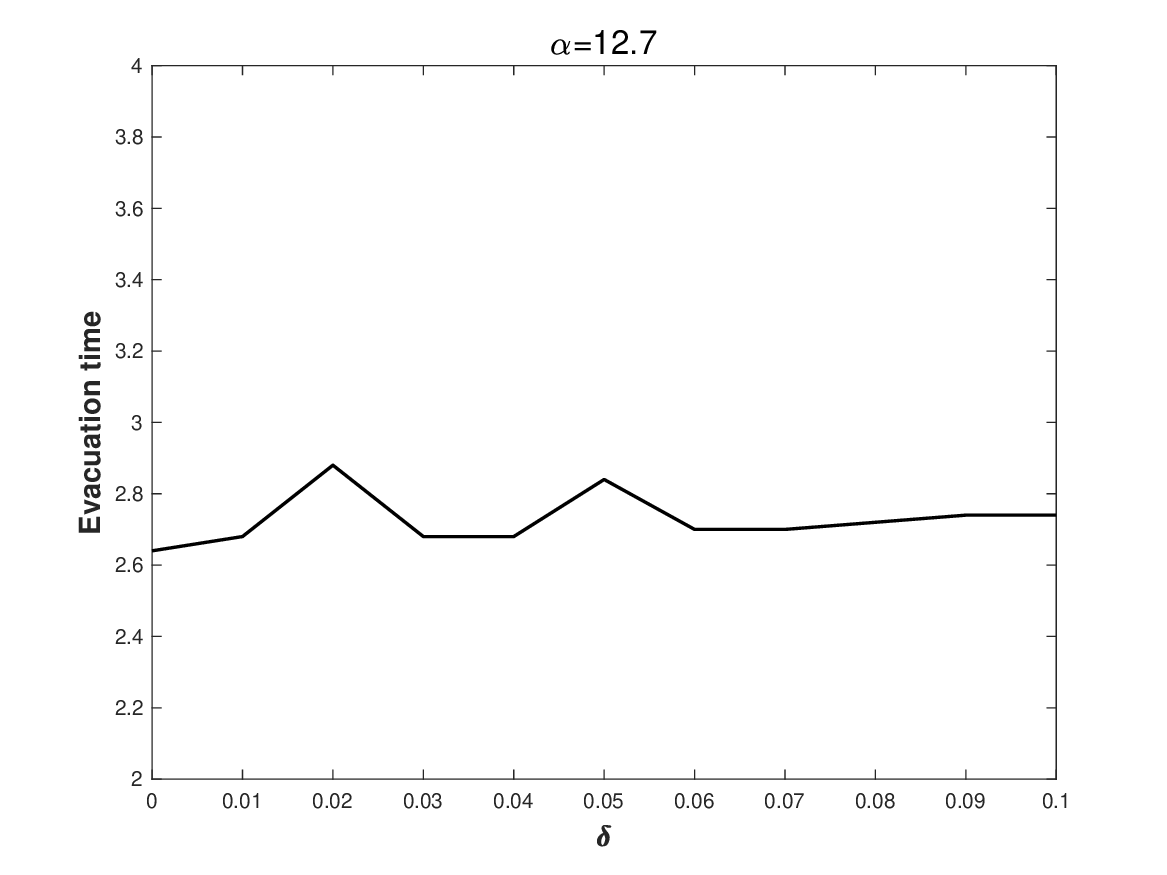}
\caption{$T_{\rm mic}$ as function of $\delta$ with $\Delta \delta = 0.01$ and initial datum \eqref{num_in_con_2}.
Left: $\delta \in [0,1]$.
Right: $\delta \in [0,0.1]$.
Top: $\alpha=1$.
Bottom: $\alpha=12.7$.}
\label{fig:delta}
\end{figure}

We then consider the variation { of $T_{\rm mic}$ corresponding to a shift perturbation of the initial datum \eqref{num_in_con}.
We consider the initial datum}
\begin{equation}\label{num_in_con_2}
\bar{\rho}(x) =\begin{cases} 0.9 & \mbox{ if } -1+\delta\leq x < -0.5+\delta,\\
                          0.9 & \mbox{ if } -0.4+\delta\leq x < 0+\delta,\\
                          0 & \mbox{ otherwise},
\end{cases}
\end{equation}
for $\delta\in[0,1]$. In \figurename~\ref{fig:delta} we plot the evacuation time in the cases $\alpha=1$ and $\alpha=12.7$, the latter corresponding to one of the jumps in \figurename~\ref{fig:alpha}, right. 
In both cases we observe a continuous dependence of $T_{\rm mic}$ with respect to $\delta$, with possible jumps in its derivatives, see \figurename~\ref{fig:delta}, right.

\subsubsection*{{Third numerical simulation}}

\begin{figure}[!htb]
\includegraphics[width=.49\linewidth, trim={10mm 4mm 16mm 4mm}, clip]{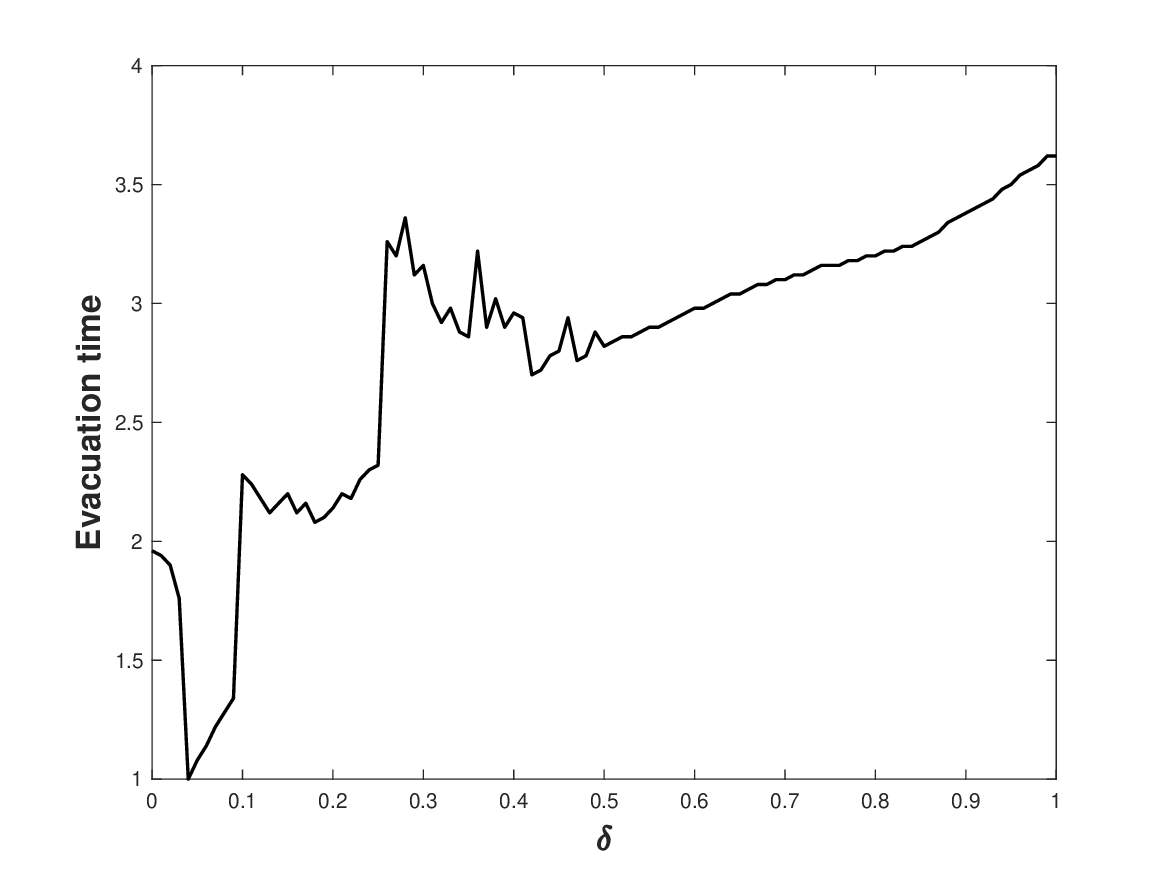}
\caption{$T_{\rm mic}$ as function of $\delta$  with $\Delta \delta = 0.01$, initial datum \eqref{num_in_con_4} and $\alpha=12.7$.}
\label{fig:epsilon}
\end{figure}

{
The last example concerns the initial condition
\begin{equation}\label{num_in_con_4}
\bar{\rho}(x) =\begin{cases} 0.9 & \mbox{ if } -0.5-\delta\leq x < -0.5+\delta,\\
                          0.9 & \mbox{ if } 0.75\leq x < 1,\\
                          0 & \mbox{ otherwise},
\end{cases}
\end{equation}
for $\delta\in[0,1]$.
We observe in \figurename~\ref{fig:epsilon} the discontinuities of $T_{\rm mic}$. Such discontinuities correspond to new crossing of the turning curves by particles trajectories, see \figurename~\ref{fig:crossing} where we plot particles path for values of $\delta$ across the two jumps at $\delta=0.1$ and $\delta=0.26$.} 

\begin{figure}[!htb]
\includegraphics[scale=0.25]{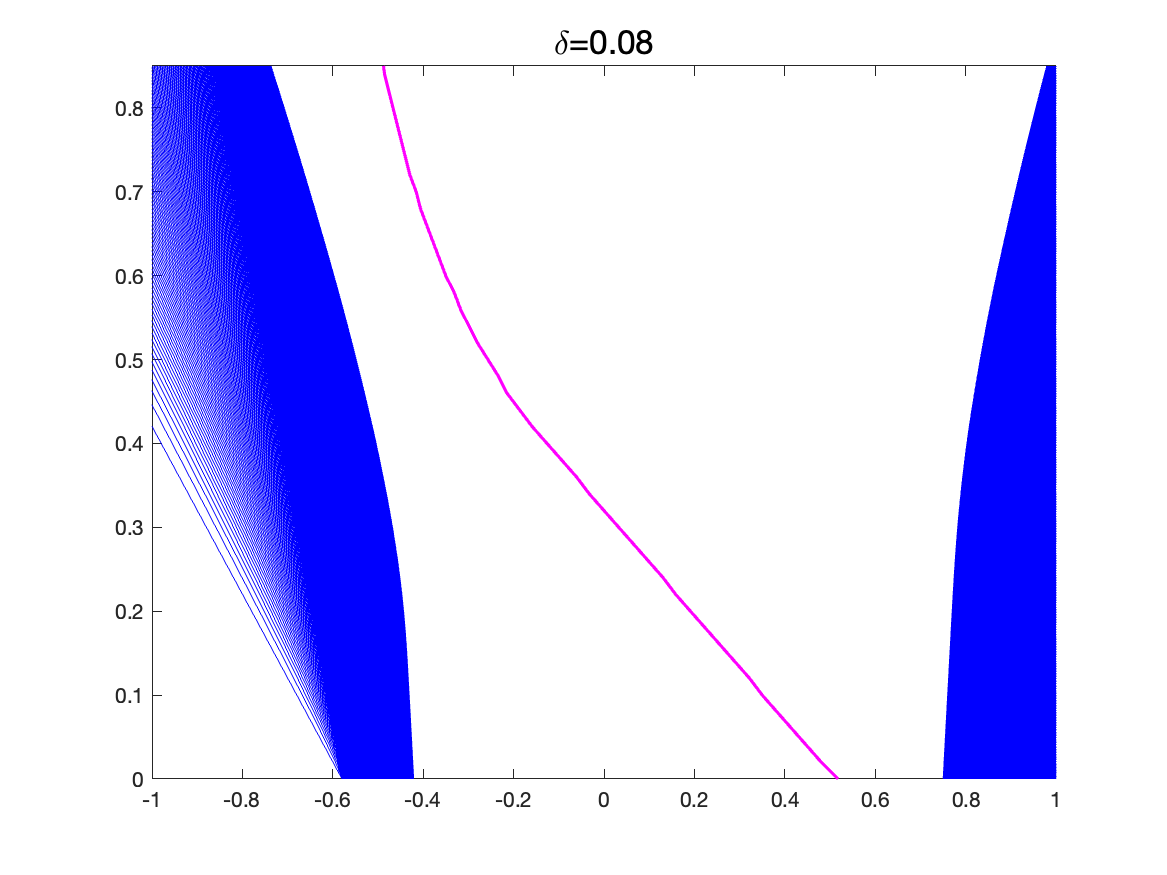}
\includegraphics[scale=0.25]{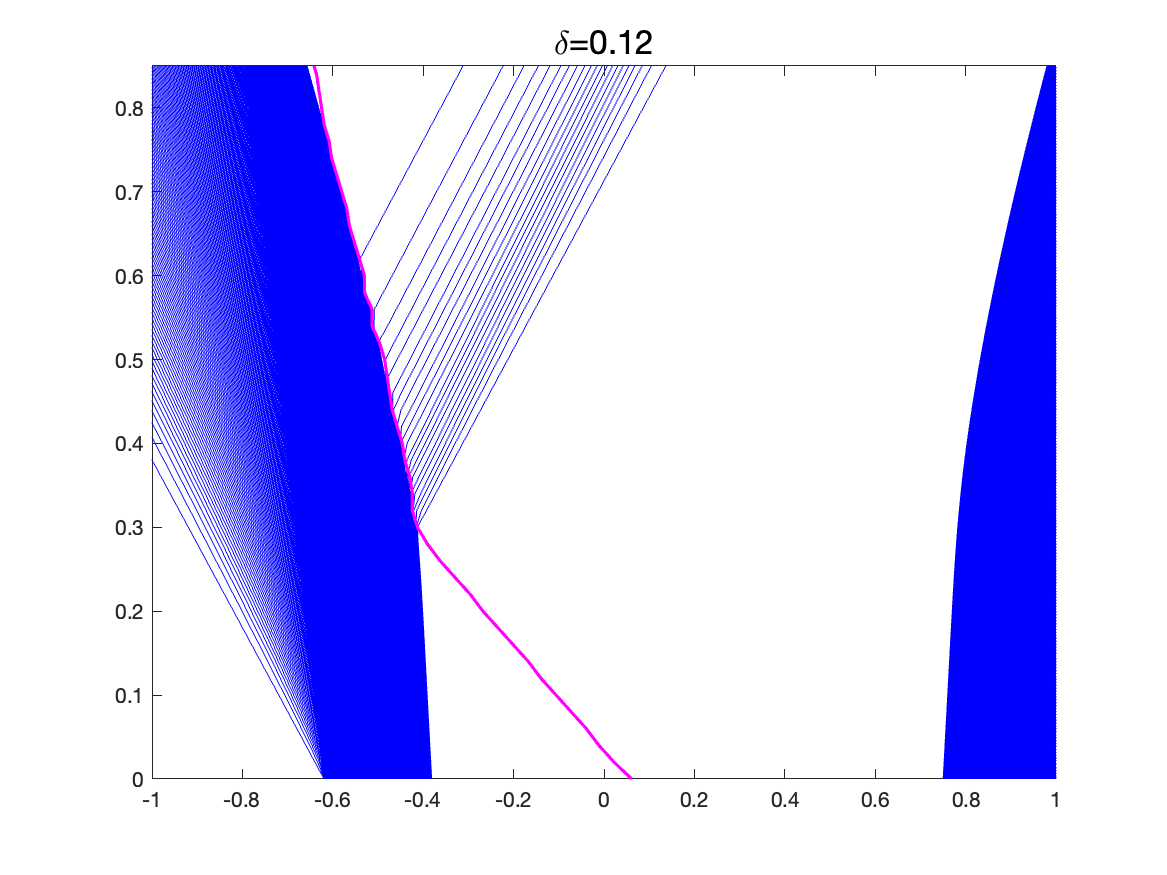}\\
\includegraphics[scale=0.25]{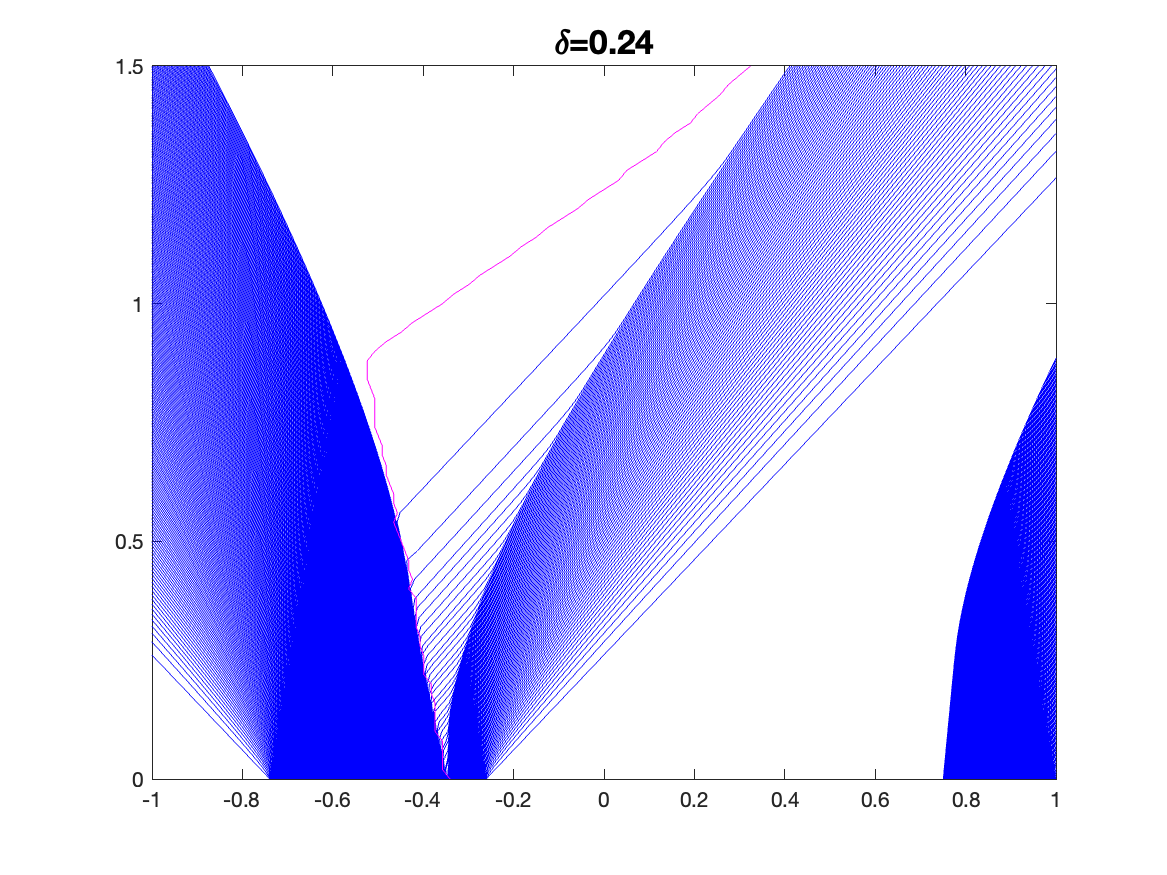}
\includegraphics[scale=0.25]{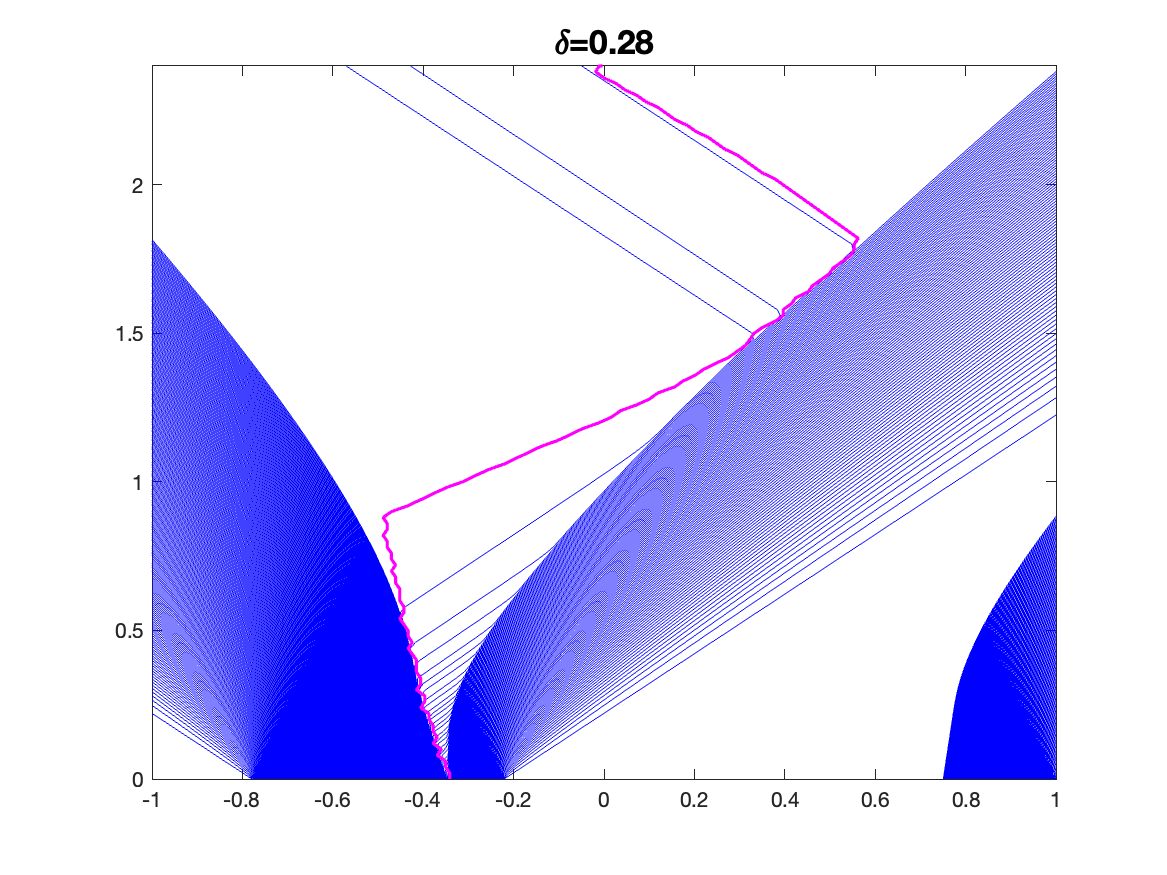}
\caption{Explanation of the discontinuities at $\delta = 0.1$ and $\delta=0.26$ of $T_{\rm mic}$ plotted in \figurename~\ref{fig:epsilon}. We select values of $\delta$ around $\delta=0.1$ (top) and $\delta=0.26$ (bottom). For $\delta=0.08$ no intersection between particles path (blue lines in the online version) and the turning curve (magenta line in the online version) occurs, whereas intersections are observed for $\delta=0.12$. A similar explanation applies to the case of $\delta=0.26$.}
\label{fig:crossing}
\end{figure}

\section*{Acknowledgments}
This paper has been supported by the RUDN University Strategic Academic Leadership Program.
SF and MDR are members of GNAMPA.
SF is partially supported by the Italian \lq\lq National Centre for HPC, Big Data and Quantum Computing\rq\rq, - Spoke 5 \lq\lq Environment and Natural Disasters\rq\rq, by the InterMaths Network, \url{www.intermaths.eu}, by the Ministry of University and Research (MIUR), Italy under the grant PRIN 2020 - Project N.~20204NT8W4, \lq\lq Nonlinear Evolutions PDEs, fluid dynamics and transport equations: theoretical foundations and applications\rq\rq, and by the INdAM project N.~E53C22001930001 \lq\lq MMEAN-FIELDSS\rq\rq.
MDR acknowledges the support by the INdAM-GNAMPA Project CUP E53C23001670001, and PRIN 2022 project \lq\lq Modeling, Control and Games through Partial Differential Equations\rq\rq\ (D53D23005620006), funded by the European Union - Next Generation EU.

\bibliographystyle{abbrv}
\bibliography{refe}

\end{document}